\newtheorem{thm}{Theorem}[section]\newtheorem{lem}[thm]{Lemma}
\theoremstyle{definition}\newtheorem{defn}[thm]{Definition}
\theoremstyle{remark}
\numberwithin{equation}{section}
\newcommand{\abs}[1]{\left\vert #1\right\vert}%
\newcommand{\myp}[2][0cm]{\mathopen{}\left(#2\parbox[h][#1]{0cm}{}\right)}
\newcommand{\set}[1]{\left\{ #1\right\}}%
\newcommand{\e}{\mathrm{e}}%
\renewcommand{\i}{\mathrm{i}}%
\newcommand{\q}{\mathfrak{q}_s}%
\newcommand{\BigO}[1]{\ensuremath{\operatorname{O}\bigl(#1\bigr)}}
\begin{document}

\title[Coefficients problems for families of holomorphic functions...  $\ldots$]
{Coefficients problems for families of holomorphic functions related to hyperbola}

\author[S. Kanas, V. S. Masih and A. Ebadian]{S. Kanas$^{1}$, V. S. Masih$^{2}$ and A. Ebadian$^3$}
\address{$^{1}$ University of Rzeszow, Al. Rejtana 16c, PL-35-959 Rzesz\'{o}w, Poland }
\email{skanas@ur.edu.pl}

\address{$^{2}$ Department of Mathematics, Payame Noor University, Tehran, Iran}
\email{masihvali@gmail.com; v\_soltani@pnu.ac.ir}

\address{$^{3}$ Department of Mathematics, Faculty of Science, Urmia University, Urmia, Iran}
\email{ebadian.ali@gmail.com}

\maketitle

\begin{abstract}
We consider a family of analytic and normalized functions that are related to the domains $\mathbb{H}(s)$, with a right branch of a hyperbolas $H(s)$ as a boundary. The hyperbola $H(s)$ is given by the relation $\frac{1}{\rho}=\left( 2\cos\frac{\varphi}{s}\right)^s\quad (0<s\le 1,\ |\varphi|<(\pi s)/2$). We mainly study a coefficient problem of the families of functions for which $zf'/f$ or $1+zf''/f'$ map the unit disk onto a subset of $\mathbb{H}(s)$.  We find coefficients bounds, solve Fekete-Szeg\"{o} problem and estimate the Hankel determinant.
\end{abstract}

\renewcommand{\thefootnote}{} \footnote{2010 \emph{Mathematics Subject Classification}: \textrm{30C45, 30C80}}
\footnote{\emph{Key words and phrases}: {univalent functions, subordination, starlike and convex functions, domain related to hyperbola, conic sections}}
\footnote{\textsuperscript{1} Corresponding author} \vspace{-2em}
\section{Introduction and definition}\label{intro}
Geometric interpretation of some properties of functions represents one of the major goal in geometric function theory of one complex variable. In this theory the description of geometries in succinct mathematical terms, establishing close links between certain prescribed and analytically expressed properties are the most desirable.
A specific relations occur with the families of domains contained in a right halplane, where the rigour of analytic reasoning one closely blends with the geometric intuition. The halfplanes, circular and angular domains, and domains bounded by conic sections have been popular and investigated so far, see for example \cite{BK, Dur, Goo, MM}, and also \cite{KW, KW1, Rob, Sta}. The relations between analytic functions and those domains relay on the inclusions of the image of some analytic expressions of the unit disk $\mathbb{D}$ in those domains.

Very recently a new subfamily of domains contained in a right half plane and  related to a hyperbola
$$H(s)=\left\{\rho\e^{\i \varphi}:\rho=\frac{1}{\myp{2\cos\frac{\varphi}{s}}^s},\quad -\frac{\pi s}{2}<\varphi<\frac{\pi s}{2}\right\}$$
was defined \cite{KME}.  It has been assumed that $0<s\le 1$. Hyperbola  $H(s)$ intersects the real axis at $(u,0)=(2^{-s},0)$ and has a slope angle to the real axis equal $(\pi s)/2$, see Fig.1. In a limiting case $s=1$ the hyperbola becomes a line that intersects a real axis at $(u,0)=(2^{-1},0)$. The detailed description of geometric behavior of $H(s)$ and connections with the other curves of the right halfplane was given in \cite{KME}.

Let $\mathbb{H}(s)$ be a domain with $H(s)$ as a boundary. $\mathbb{H}(s)$ is symmetric with the real axis, and starlike with respect to $1\in \mathbb{H}(s)$.  The family  $\{\mathbb{H}(s), 0< s \le 1\}$  constitutes a new subfamily of domains contained in a right halfplane which  is not reduced to any family of domains considered until now. Even the hyperbolas that occurred in the consideration of conic sections $\partial\Omega_k\  (0\le k < \infty)$ (see \cite{KW, KW1}) for no choice of parameter $k$ and $s$  reduce to $\mathbb{H}(s)$.
\begin{figure}[h]
\begin{center}
\qquad\includegraphics[scale=0.5]{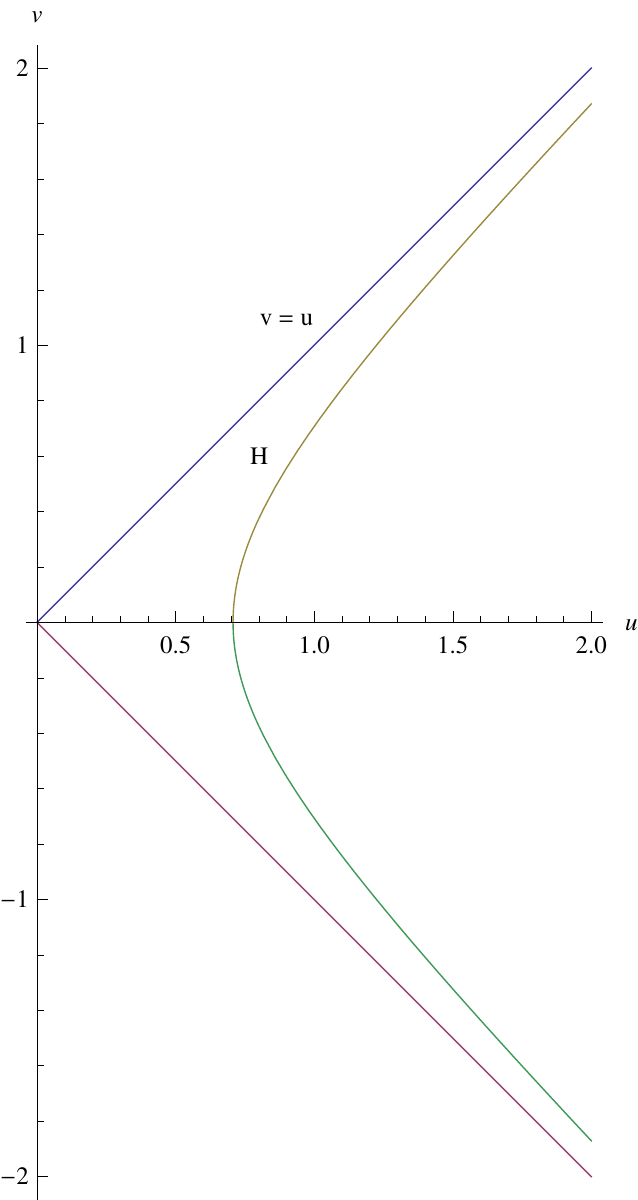}\qquad\qquad
\end{center}\end{figure}

\begin{center}Fig. 1. The hyperbola $H(s)$ for $s=1/2$.\end{center}
By $\mathcal{H}$ we will denote the class of functions  $f$ \emph{holomorphic} in the open unit disc  $\mathbb{D}=\left\{z\in \mathbb{C}:\ |z|<1\right\}$ of the complex plane $\mathbb{C}$, and with the power series
\begin{equation}\label{A}
f(z)=z+\sum_{n=2}^{\infty}a_nz^n \qquad \myp{z \in \mathbb{D}},
\end{equation}
and by $\mathcal{S}$  its the most important subclass consisting of univalent functions.

A comprehensive geometric characteristics of members of $\mathcal{S}$ is still not known. The definitely much  known are functions from various subclasses of $\mathcal{S}$, such as $\mathcal{ST}$, $\ \mathcal{CV}$, i.e. functions  which are starlike with respect to the origin and convex, respectively.

For further considerations we denote by $\mathcal{P}$  the class of analytic functions that maps the origin to the point $1$ and has a positive real part. The class $\mathcal{P}$ has the significant relations with the majority of subclasses of $\mathcal{S}$, and is known as the Carath\'{e}odory class.
We have
$$\mathcal{P}=\{p(z)= 1+p_1z+p_2 z^2 +\cdots,\ \Re\, p(z) >0, z\in \mathbb{D}\}.$$
Now, let us denote
$$\q(z):=\frac{1}{\myp{1-z}^s}= \e^{-s\ln(1-z)}\qquad\myp{0<s\le1, \,z\in \mathbb{D}},$$
where the branch of logarithm is determined by $\q(0)=1$. The function $\q$  maps the unit disk onto a domain $\mathbb{H}(s)$ and play a role of extremal function in a class $\mathcal{P}(\q)=\{p\in \mathcal{P}: p\prec \q\}$.
The power series of $\q$ has the form
\begin{align}\label{zq}
\q(z)&=1+\sum_{n=1}^{\infty}q_nz^n=1+\sum_{n=1}^{\infty}\frac{s\myp{s+1}\cdots\myp{s+n-1}}{n!} z^n=1+\sum_{n=1}^{\infty}\frac{(s)_n}{n!} z^n,
\end{align}
where $(a)_n$ is known as the Pochhamer symbol; $(a)_n=\Gamma(a+n)/\Gamma(a)$.
The necessary and sufficient condition which describes the members of $\mathcal{P}(\q)$ was given in \cite{KME}.
\begin{thm}[\cite{KME}]\label{main0} Let $p\in \mathcal{P}$. A function $p\in \mathcal{P}(\q)$ if and only if
\begin{equation*}
\abs{p(z)^{1/s}-1} <\abs{p(z)}^{1/s}  \quad\myp{z\in \mathbb{D}}.
\end{equation*}\end{thm}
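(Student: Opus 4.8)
The plan is to interpret the membership $p\in\mathcal{P}(\q)$ purely geometrically and then invert the defining map $\q$. Since every $p\in\mathcal{P}$ already satisfies $p(0)=1=\q(0)$, and $\q$ maps $\mathbb{D}$ conformally (univalently) onto $\mathbb{H}(s)$, the subordination $p\prec\q$ is equivalent to the inclusion $p(\mathbb{D})\subseteq\mathbb{H}(s)$. Thus it suffices to obtain an analytic description of the domain $\mathbb{H}(s)$ and test it pointwise: I would prove that, for $w$ in the right half-plane, $w\in\mathbb{H}(s)$ holds if and only if $\abs{w^{1/s}-1}<\abs{w}^{1/s}$, and then apply this with $w=p(z)$ as $z$ runs over $\mathbb{D}$.

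First I would invert $\q$. Writing $w=\q(z)=\myp{1-z}^{-s}$ and solving gives $1-z=w^{-1/s}$, so $z=1-w^{-1/s}$. Because $z$ ranges over $\mathbb{D}$, the point $\zeta:=1-z=w^{-1/s}$ ranges exactly over the disk $\set{\zeta:\abs{\zeta-1}<1}$, the image of $\mathbb{D}$ under $z\mapsto 1-z$. Consequently $w\in\mathbb{H}(s)$ precisely when $\abs{w^{-1/s}-1}<1$. This inversion is the key idea: it recasts the hyperbolic domain $\mathbb{H}(s)$ as the preimage, under $w\mapsto w^{-1/s}$, of a Euclidean disk tangent to the imaginary axis at the origin.

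What remains is a short algebraic step. Since $0\notin\mathbb{H}(s)$ we may multiply the inequality $\abs{w^{-1/s}-1}<1$ by $\abs{w^{1/s}}=\abs{w}^{1/s}>0$, obtaining $\abs{w^{1/s}\myp{w^{-1/s}-1}}=\abs{1-w^{1/s}}<\abs{w}^{1/s}$, which is exactly $\abs{w^{1/s}-1}<\abs{w}^{1/s}$. Reversing the multiplication shows the two inequalities are equivalent, so substituting $w=p(z)$ yields both implications of the theorem simultaneously.

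The main obstacle I anticipate is not the computation but the bookkeeping of branches. One must fix the principal branches of $w^{1/s}$ and $w^{-1/s}$ and check that they are compatible with the definition $\q(z)=\e^{-s\ln(1-z)}$, $\q(0)=1$. This is where the hypotheses $0<s\le 1$ and $p\in\mathcal{P}$ enter: the former keeps $\Arg w$ inside $\myp{-\pi s/2,\pi s/2}\subseteq\myp{-\pi/2,\pi/2}$ for $w\in\mathbb{H}(s)$, while the latter gives $\Re\, p>0$, so that $p(z)$ avoids the origin and the negative reals and the powers $p(z)^{\pm1/s}$ are single-valued and holomorphic on $\mathbb{D}$. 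Verifying that $\myp{\zeta^{-s}}^{-1/s}=\zeta$ for $\zeta$ in the tangent disk — so that the inversion $z=1-w^{-1/s}$ indeed recovers the principal branch — is the one place demanding care; once the argument ranges are confirmed to lie within the principal strip, the univalence of $\q$ together with the equivalence above closes the proof.
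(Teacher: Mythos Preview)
The paper does not prove this theorem; it is quoted from \cite{KME} without argument, so there is no in-paper proof to compare against. Your proposed argument is correct and is the natural one: invert $\q$ to see that $\mathbb{H}(s)$ is exactly the set of $w$ with $\abs{w^{-1/s}-1}<1$, then clear the denominator.

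Your caution about branches is well placed and is the only point requiring elaboration. For the forward direction it is immediate, since $w\in\mathbb{H}(s)$ forces $\abs{\Arg w}<\pi s/2$. For the converse, knowing only $p\in\mathcal{P}$ gives a priori $\abs{\Arg p(z)}<\pi/2$, so $-(1/s)\Arg p(z)$ may exceed $(-\pi,\pi]$ when $s<1/2$; however, the hypothesis $\abs{p(z)^{-1/s}-1}<1$ pins $p(z)^{-1/s}$ inside the disk $\abs{\zeta-1}<1$, hence $\Arg\bigl(p(z)^{-1/s}\bigr)\in(-\pi/2,\pi/2)$, and a continuity argument from $z=0$ (where everything equals $1$) shows that $-(1/s)\Arg p(z)$ and $\Arg\bigl(p(z)^{-1/s}\bigr)$ coincide throughout $\mathbb{D}$, not merely modulo $2\pi$. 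This yields $\ln\bigl(p(z)^{-1/s}\bigr)=-(1/s)\ln p(z)$ with principal branches, and hence $\q\bigl(1-p(z)^{-1/s}\bigr)=p(z)$, giving the Schwarz function $\omega(z)=1-p(z)^{-1/s}$ explicitly. With that detail filled in, your outline is a complete proof.
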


\begin{defn}[\cite{KME}]\label{def1}
By $\mathcal{ST}_{hpl}(s)$ we denote the subfamily of $\mathcal{S}$ consisting of the functions $f$, satisfying the condition
\begin{equation*}
\frac{zf'(z)}{f(z)}\prec \q(z) \quad \myp{z\in \mathbb{D}},
\end{equation*}
and by $\mathcal{CV}_{hpl}(s)$ the subclass of univalent functions $f$ such that
\begin{equation*}
1+\frac{zf''(z)}{f'(z)}\prec \q(z) \quad \myp{z\in \mathbb{D}},
\end{equation*}
where $\prec$ denotes the symbol of subordination.
\end{defn}
Let $\Phi_{s,n}\in \mathcal{ST}_{hpl}(s)$ be given by
\[
\frac{z\Phi'_{s,n}(z)}{\Phi_{s,n}(z)} = \frac{1}{\myp{1-z^n}^s} \quad \myp{z\in \mathbb{D}, \, n=1, 2,  \ldots}.
\]
Then, the functions $\Phi_{s,n}(z)$ is of the form
\begin{multline}\label{F}
\Phi_{s,n}(z)=z\exp\myp{ \int_{0}^{z} \frac{\mathfrak{q}_{s}(t^n)-1}{t}\,
   \mathrm{dt}}=z+\frac{s}{n}z^{n+1}+\frac{(n+2)s^2+ns}{4n^2}z^{2n+1}\\
   +\frac{4n^2s+(9n+6n^2)s^2+(2n^2+9n+6)s^3}{36n^3}z^{3n+1}\cdots.
\end{multline}
Especially for $n=1$,
\begin{equation}\label{F1}
\Phi_{s}(z):=\Phi_{s,1}(z)=z+s\,z^{2}+\frac{3s^2+s}{4}z^{3}+\frac{4s+15 s^2+17s^3}{36}z^{4}+\cdots.
\end{equation}
Also, let $K_{s,n}\in \mathcal{CV}_{hpl}(s)$ be given by
\[
1+\frac{zK''_{s,n}(z)}{K'_{s,n}(z)} = \frac{1}{\myp{1-z^n}^s} \quad \myp{z\in \mathbb{D}, \, n=1, 2,  \ldots}.
\]
From the above, we find that
\begin{align}\label{K}
K_{s,n}(z)&=\int_{0}^{z}\set{\exp \int_{0}^{w} \frac{\mathfrak{q}_{s}(t^n)-1}{t}\,
   \mathrm{d}t}\mathrm{d}w= z+\frac{s}{n(n+1)}z^{n+1}\\
  & +\frac{(n+2)s^2+ns}{4n^2(2n+1)}z^{2n+1}
  +\frac{4n^2s+(9n+6n^2)s^2+(2n^2+9n+6)s^3}{36n^3(3n+1)}z^{3n+1}\cdots.\notag
\end{align}
Especially for $n=1$,
\begin{equation}\label{K1}
K_{s}(z):=K_{s,1}(z)= z+\frac{s}{2}z^{2} +\frac{3s^2+s}{12}z^{3}+ \frac{4s+15s^2+17s^3}{144}z^4+\cdots.
\end{equation}

\section{Preliminary results}
In order to achieve our aim  we recall some definitions  and lemmas that will be useful in the next part of our paper.

By $\mathcal{B}$ we denote the class of analytic selfmappings of the unit disk, that maps the origin onto the origin, i.e.
\begin{equation*}
\mathcal{B}=\left\{\omega(z)\in \mathcal{H},\ \omega(0)=0,\ |\omega(z)|<1,\ z\in\mathbb{D}\right\}.\end{equation*}
The class $\mathcal{B}$ is known as the class of \textit{Schwarz functions}.

\begin{lem}[\cite{KAZ}]\label{lem1}
 For the function $\omega\in \mathcal{B}$ and $\omega(z)=w_1z+w_2z^2+\cdots$ it holds
 \begin{align*}
 w_2&=\xi (1-w_1^2),\\
 w_3&=(1-w_1^2)(1-|\xi|^2)\zeta-w_1(1-w_1^2)\xi^2,
 \end{align*}
 for some complex number $\xi$, $\zeta$ with $\abs{\xi}\leq 1$ and $\abs{\zeta}\leq 1$.
\end{lem}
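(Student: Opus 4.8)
The plan is to read off both coefficient relations from the Schur (Schwarz--Pick) algorithm. First I would set $\omega_1(z)=\omega(z)/z=w_1+w_2z+w_3z^2+\cdots$; since $\omega\in\mathcal B$ the Schwarz lemma gives $|\omega(z)|\le|z|$, so $\omega_1$ maps $\mathbb D$ into the closed disk with $\omega_1(0)=w_1$ and $|w_1|\le 1$. If $|w_1|=1$ the maximum principle forces $\omega_1\equiv w_1$, whence $w_2=w_3=0$ and both identities hold with $\xi=\zeta=0$; so the interesting case is $|w_1|<1$.

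Next I would pass to the disk automorphism
\begin{equation*}
\psi(z)=\frac{\omega_1(z)-w_1}{1-\overline{w_1}\,\omega_1(z)}=c_1z+c_2z^2+\cdots,
\end{equation*}
which is again a Schwarz function. Expanding the quotient in powers of $z$, from $\omega_1(z)-w_1=w_2z+w_3z^2+\cdots$ and $1-\overline{w_1}\,\omega_1(z)=(1-|w_1|^2)-\overline{w_1}w_2z+\cdots$ one obtains
\begin{equation*}
c_1=\frac{w_2}{1-|w_1|^2},\qquad c_2=\frac{w_3}{1-|w_1|^2}+\frac{\overline{w_1}\,w_2^2}{(1-|w_1|^2)^2}.
\end{equation*}
Now $\psi\in\mathcal B$ forces $|c_1|\le 1$ and, applying the Schwarz--Pick inequality to $\psi(z)/z$, the sharp bound $|c_2|\le 1-|c_1|^2$. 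Writing $c_1=\xi$ (so $|\xi|\le 1$) and $c_2=(1-|\xi|^2)\zeta$ (so $|\zeta|\le 1$) and solving the two displayed equations for $w_2$ and $w_3$ yields
\begin{align*}
w_2&=(1-|w_1|^2)\,\xi,\\
w_3&=(1-|w_1|^2)(1-|\xi|^2)\,\zeta-\overline{w_1}\,(1-|w_1|^2)\,\xi^2.
\end{align*}

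The step I expect to require the most care is reconciling this \emph{modulus} form with the statement as written, where $1-|w_1|^2$ and $\overline{w_1}$ appear instead as $1-w_1^2$ and $w_1$. Since $|1-w_1^2|\ge 1-|w_1|^2$, with equality precisely when $w_1\in\mathbb R$, the two presentations agree exactly in the real case, which is the form actually used in the Fekete--Szeg\"{o} and Hankel--determinant computations that follow. I would therefore establish the modulus version in full and record the displayed identities as its real specialization, flagging this reduction explicitly as the genuinely delicate point. The only other ingredient requiring justification, the sharp bound $|c_2|\le 1-|c_1|^2$ for Schwarz functions, is a one-line consequence of the Schwarz lemma applied to $\psi(z)/z$.
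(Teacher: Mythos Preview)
The paper does not supply a proof of this lemma; it is quoted from \cite{KAZ} and stated without argument. Your Schur--algorithm derivation is exactly the standard route to such parametrizations and is correct as written: the disk automorphism step produces $c_1=w_2/(1-|w_1|^2)$ and the displayed $c_2$, and one more application of the Schwarz lemma to $\psi(z)/z$ gives $|c_2|\le 1-|c_1|^2$, which is all you need.

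Your flagged discrepancy is real and your diagnosis is the right one. The general identity carries $1-|w_1|^2$ and $\overline{w_1}$, not $1-w_1^2$ and $w_1$; the form printed in the paper is precisely the specialization to $w_1\in\mathbb{R}$. This is harmless for the paper's purposes because every application (Theorems~\ref{th:feket1} and the convex analogue) invokes rotation invariance to reduce to $w_1=x\in[0,1]$ before substituting. So your plan---prove the modulus version in full and record the displayed formulas as the real case---is the honest way to present it, and the remark you propose adding is exactly the clarification the statement needs.
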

\begin{lem}[\cite{ARS}]\label{lemma1}
If $\omega(z)=\sum_{n=1}^{\infty}w_nz^n\in \mathcal{B}$, then for real numbers $t$ it holds
 \begin{equation*}
    \abs{w_2-t w_1^2}
    \leq  \left\{
\begin{array}{ll}
	-t\quad &\textit{for}\quad t \le-1,\\[0.3em]
	1\quad &\textit{for}\quad -1\leq t \leq 1, \\[0.3em]
	t	\quad &\textit{for}\quad t \ge 1.
\end{array}\right.
 \end{equation*}
When $t<1$ or $t>1$, the equality holds if and only if $\omega(z)=z$ or one of its rotations. If $-1<t<1$, then equality
holds if and only if $\omega(z)=z^2$ or one of its rotations. Equality holds for $t=-1$ if and only if $\omega(z)=z\frac{x+z}{1+xz}$ $\ \myp{0\le x\le 1}$ or one of its rotations while for $t = 1$, equality holds if and only $\omega(z)=-z\frac{x+z}{1+xz}\ \myp{0\le x\le 1}$ or one of its rotations.
\end{lem}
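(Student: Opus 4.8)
The plan is to reduce the whole statement to the two elementary coefficient estimates for Schwarz functions, $\abs{w_1}\le 1$ and $\abs{w_2}\le 1-\abs{w_1}^2$, the second of which is just the modulus bound carried by the Schur decomposition $w_2=\xi(1-\abs{w_1}^2)$, $\abs{\xi}\le 1$, recorded in Lemma \ref{lem1}. Writing $r=\abs{w_1}\in[0,1]$ and applying the triangle inequality gives
\[
\abs{w_2-t w_1^2}\le \abs{w_2}+\abs{t}\,\abs{w_1}^2\le (1-r^2)+\abs{t}\,r^2=1+\myp{\abs{t}-1}r^2 .
\]
First I would treat the right-hand side as an affine function of $r^2\in[0,1]$ and maximize it separately on each range of $t$.

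That maximization is immediate. When $\abs{t}\le 1$ the slope $\abs{t}-1$ is nonpositive, so the bound is largest at $r=0$ and equals $1$; when $\abs{t}\ge 1$ it is nondecreasing in $r^2$, so the maximum sits at $r=1$ and equals $\abs{t}$. Since $\abs{t}=-t$ for $t\le -1$ and $\abs{t}=t$ for $t\ge 1$, this recovers the three-line bound verbatim, and the inequality part of the lemma requires nothing beyond the triangle inequality and the two standard estimates.

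The substantive part is the equality discussion, which I would organize according to where the maximization above is attained. For $\abs{t}>1$ equality forces $r=1$, so $\abs{w_1}=1$ and the Schwarz lemma gives $\omega(z)=\e^{\i\theta}z$, a rotation of the identity, with $w_2=0$; for $\abs{t}<1$ equality forces $r=0$, hence $w_1=0$ and $\abs{w_2}=1$, which by a second application of the Schwarz lemma to $\omega(z)/z$ yields the rotations $\omega(z)=\e^{\i\theta}z^2$. I expect the genuine obstacle to be the endpoints $t=\pm 1$, where $\abs{t}-1=0$ makes the bound equal $1$ for \emph{every} $r$, so equality can persist along a one-parameter family rather than at a single function. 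Here I would return to $w_2=\xi(1-r^2)$ with $w_1=r\e^{\i\alpha}$ and note that $\abs{\xi(1-r^2)\mp r^2\e^{2\i\alpha}}=1$ can hold only when $\abs{\xi}=1$ and the two terms share a common argument, i.e. $\xi=\mp\e^{2\i\alpha}$. Reconstructing $\omega$ from these saturated Schur data (or simply verifying directly) produces exactly the Blaschke-type extremals $\omega(z)=\mp z\,\dfrac{x+z}{1+xz}$, $x=r\in[0,1]$, together with their rotations, for which $\abs{w_2-t w_1^2}=1$ identically. Pinning down this full extremal family, and checking that nothing outside it attains equality at $t=\pm 1$, is the step I would be most careful about.
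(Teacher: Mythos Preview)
The paper does not prove this lemma at all; it is quoted from \cite{ARS} as a preliminary tool and used as a black box in Theorems \ref{thFS} and \ref{th.3.7}. So there is no argument in the paper to compare yours against.

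That said, your argument is correct and is the standard one. The inequality drops out of the triangle inequality together with $\abs{w_2}\le 1-\abs{w_1}^2$, and your affine maximization in $r^2$ is exactly right. The equality analysis for $\abs{t}>1$ (forcing $r=1$, hence a rotation of $z$) and for $\abs{t}<1$ (forcing $r=0$ and $\abs{w_2}=1$, hence a rotation of $z^2$) is clean.

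For the endpoint $t=\pm 1$ you correctly isolate the issue: the upper bound is flat in $r$, so one must identify a one-parameter family. The piece you should make explicit is that $\abs{\xi}=1$ in the Schur parametrization does more than fix $w_1,w_2$; it determines $\omega$ completely. Writing $\phi(z)=\omega(z)/z$ and $\psi(z)=\dfrac{\phi(z)-w_1}{1-\overline{w_1}\,\phi(z)}$, one has $\psi\in\mathcal{B}$ with $\psi'(0)=\xi$, so $\abs{\xi}=1$ forces $\psi(z)=\xi z$ by the Schwarz lemma, whence $\omega(z)=z\,\dfrac{w_1+\xi z}{1+\overline{w_1}\,\xi z}$. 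Inserting $w_1=r\e^{\i\alpha}$ and $\xi=\mp\e^{2\i\alpha}$ gives precisely the rotations of $\mp z\,\dfrac{x+z}{1+xz}$ with $x=r\in[0,1]$, and nothing else, which is what you anticipated. With that rigidity step written out, the proof is complete.
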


We  say that $f$ is \textit{subordinate} to $F$  in $\mathbb{D}$, written
$f(z)\prec F(z)$  (or $f\prec F$), if there exists a function $\omega\in \mathcal{B}$ such that $f(z)=F(\omega(z)) \ (z\in \mathbb{D})$ (see, for example  \cite{Dur}). If  $F$ is univalent in $\mathbb{D}$, then $f\prec F$ holds  if and only, if $f(0)=F(0)$ and $f(\mathbb{D})\subset F(\mathbb{D})$.

\begin{thm}[\cite{Rog}]\label{Rog}
Let  $g(z)=\sum_{n=1}^{\infty}B_nz^n$ be analytic and convex univalent in $\mathbb{D}$. If $h(z)=\sum_{n=1}^{\infty}A_nz^n$ is analytic
in $\mathbb{D}$ and satisfies the  subordination $h\prec g$ in $\ \mathbb{D}$, then
$$\abs{A_n}\leq \abs{B_1} \qquad \myp{n=1, 2, \dots}.$$
\end{thm}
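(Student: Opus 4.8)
The plan is to prove this via the classical \emph{root-of-unity averaging} argument, whose whole point is to convert convexity of the majorant's image into the sharp bound $\abs{B_1}$ (rather than the weaker $n\abs{B_1}$ available for a general univalent $g$). Since $h\prec g$, fix a Schwarz function $\omega\in\mathcal{B}$ with $h=g\circ\omega$, and set $\zeta=\e^{2\pi\i/n}$. First I would form the average
\[
H_n(z):=\frac{1}{n}\sum_{k=0}^{n-1} h\myp{\zeta^k z}=\frac{1}{n}\sum_{k=0}^{n-1} g\myp{\omega\myp{\zeta^k z}}.
\]
Because $\frac{1}{n}\sum_{k=0}^{n-1}\zeta^{km}$ equals $1$ when $n\mid m$ and $0$ otherwise, the power series of $H_n$ contains only the exponents divisible by $n$, so $H_n(z)=A_n z^n+A_{2n}z^{2n}+\cdots$. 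Thus $A_n$ is precisely the leading coefficient of $H_n$, and it suffices to bound it.

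The crucial step is to show that $H_n\prec g$. Here convexity enters: each point $\omega\myp{\zeta^k z}$ lies in $\mathbb{D}$, hence each $g\myp{\omega\myp{\zeta^k z}}$ lies in the convex domain $g(\mathbb{D})$; the average above is a convex combination (equal positive weights $1/n$ summing to $1$) of points of a convex set, and therefore $H_n(z)\in g(\mathbb{D})$ for every $z\in\mathbb{D}$. Since also $H_n(0)=g(0)=0$ and $H_n$ is analytic on $\mathbb{D}$, the subordination criterion for a univalent majorant recalled above ($f\prec F$ iff $f(0)=F(0)$ and $f(\mathbb{D})\subset F(\mathbb{D})$, $g$ being univalent) yields $H_n\prec g$.

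It then remains to read off the leading coefficient. Write $H_n=g\circ\psi$ with $\psi\in\mathcal{B}$. If $A_n=0$ there is nothing to prove, so assume $A_n\neq0$, i.e. $H_n$ has a zero of order exactly $n$ at the origin. As $g(w)=B_1 w\myp{1+\BigO{w}}$ with $B_1=g'(0)\neq0$ (univalence), the order of the zero of $g\circ\psi$ equals the order of the zero of $\psi$; hence $\psi(z)=\psi_n z^n+\cdots$ and, comparing leading terms, $A_n=B_1\psi_n$. Finally, since $\psi/z^n$ is analytic on $\mathbb{D}$ and satisfies $\abs{\psi(z)/z^n}\le 1/r^n$ on $\abs{z}=r$, the maximum-modulus principle followed by $r\to1$ gives $\abs{\psi_n}\le 1$, whence $\abs{A_n}=\abs{B_1}\abs{\psi_n}\le\abs{B_1}$.

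The main obstacle, and the heart of the matter, is the subordination step $H_n\prec g$: it is exactly the convexity of $g(\mathbb{D})$ that keeps the averaged values inside the image and thereby produces the uniform bound $\abs{B_1}$ independent of $n$. The remaining extraction of the leading coefficient is routine but requires the small care of treating the trivial case $A_n=0$ separately and invoking $B_1\neq0$; a standard approximation $g\rightsquigarrow g(\rho\,\cdot)$, $\rho\uparrow1$, can be used beforehand if one prefers $g$ regular on $\overline{\mathbb{D}}$, though the argument above does not actually need it.
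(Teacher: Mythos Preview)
Your argument is the classical Rogosinski averaging proof and is correct as written: the convexity of $g(\mathbb{D})$ is exactly what forces the averaged function $H_n$ to remain subordinate to $g$, and the Schwarz-type estimate on the auxiliary $\psi$ then yields $\abs{A_n}\le\abs{B_1}$. The small technical points (handling $A_n=0$, invoking $B_1=g'(0)\neq0$ from univalence, the generalized Schwarz bound $\abs{\psi_n}\le1$) are all treated properly.

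As to the comparison: the paper does not prove this theorem at all. It is quoted verbatim as a known result of Rogosinski \cite{Rog} and used as a tool (to get $\abs{p_n}\le s$ from $p\prec\q$, since $\q$ is convex). So you have supplied a self-contained proof where the paper simply cites the literature; your approach \emph{is} the standard one from Rogosinski's original work, so there is no methodological divergence to discuss.
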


\begin{defn}
Let $q,n \ge 1$. By $H_q(n)$ we denote the $q^{th}$ \textit{Hankel determinant} of the form
\begin{equation*}
H_q(n) =
\begin{vmatrix}
a_n & a_{n+1} & \dots & a_{n+q-1} \\
a_{n+1} & a_{n+2} &\dots & a_{n+q} \\
\vdots & \vdots  & \ddots &\vdots \\
a_{n+q-1} & a_{n+q}  & \dots & a_{n+2q-2}
\end{vmatrix}\qquad \myp{a_1=1},
\end{equation*}
where $a_k\ (k=1,2,...)$ are the coefficients  of the Taylor series expansion of function $f$ of the form \eqref{A}.
\end{defn}
\noindent $H_q(n)$ was defined by  Pommerenke \cite{Pom1, Pom2}, and for fixed $q,n$ the bounds of $|H_q(n)|$ have been studied for several subfamilies of univalent functions. The Hankel determinants $H_2(1)=a_3-a_2^2$ and $H_2(2)=a_2 a_4 - a_3^2$,  are well-known as \emph{Fekete-Szeg\"{o}} and \emph{second Hankel determinant functionals}, respectively. Further, Fekete and Szeg\"{o}  introduced the generalized functional
$a_3-\lambda a^2_2$, where $\lambda$ is some real number.
We will give the sharp upper bound for the some Hankel determinants in the subclasses of univalent functions related to hyperbola $H(s)$.

\section{Coefficient bounds}
This section is devoted to the general problem of coefficients in a class $\mathcal{ST}_{hpl}(s)$ and $\mathcal{CV}_{hpl}(s)$. We present the estimates of coefficients for elements of $\mathcal{ST}_{hpl}(s)$ and  bounds of logarithmic coefficients, but we also solve  the Fekete-Szeg\"{o} problem and a very popular the Hankel determinant $|H_2(2)|$  problem.
First we investigate the sharp order of growth for the coefficients of functions in $\mathcal{ST}_{hpl}(s)$ and $\mathcal{CV}_{hpl}(s)$ with $0<s<1/2$.
\begin{thm}
Assume that the function $f$ of the form \eqref{A} belongs to the class $\mathcal{ST}_{hpl}(s)$ $\myp{\mathcal{CV}_{hpl}(s), respectively}$ with $0<s<1/2$. Then $\abs{a_n}=\BigO{1/n}\ (\abs{a_n}=\BigO{1/n^2}$, resp.) for $n=1, 2, 3, \dots$.
\end{thm}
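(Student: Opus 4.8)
The plan is to settle the starlike case and then obtain the convex case for free. For the reduction, recall that $f\in\mathcal{CV}_{hpl}(s)$ precisely when $g(z):=zf'(z)\in\s$, because $1+zf''/f'=zg'/g$. Writing $g(z)=z+\sum_{n\ge2}b_nz^n$ with $b_n=na_n$, the starlike estimate $\abs{b_n}=\BigO{1/n}$ forces $\abs{na_n}=\BigO{1/n}$, i.e. $\abs{a_n}=\BigO{1/n^2}$. So the whole theorem rests on proving $\abs{a_n}=\BigO{1/n}$ for $f\in\s$.

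Fix $f\in\s$ and set $p:=zf'/f\prec\q$, so $p=\q\circ\omega$ for some $\omega\in\mathcal{B}$. The first step is to show every such $f$ is bounded. Integrating the logarithmic derivative along the ray to $z=re^{\i\theta}$ gives $\log\abs{f(z)/z}=\int_0^r\frac{\Re p(\rho e^{\i\theta})-1}{\rho}\,\mathrm{d}\rho$. Since $\abs{\omega(\rho e^{\i\theta})}\le\rho$ and $\Re\q$ is harmonic, its maximum over the closed disk of radius $\rho$ is attained on the circle, at the real point $\zeta=\rho$, whence $\Re p(\rho e^{\i\theta})\le\max_{\abs{\zeta}=\rho}\Re(1-\zeta)^{-s}=(1-\rho)^{-s}$. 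Therefore $\log\abs{f(z)/z}\le\int_0^1\frac{(1-\rho)^{-s}-1}{\rho}\,\mathrm{d}\rho$, and this integral converges (its only singularity, at $\rho=1$, is of the integrable type $(1-\rho)^{-s}$ because $s<1$). Thus $\abs{f(z)}\le M$ on $\mathbb{D}$ for a constant $M=M(s)$.

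The second step converts boundedness into the coefficient bound, and this is where I expect the main obstacle to lie. From $f'=fp/z$ and Cauchy's formula, $na_n=\frac{1}{2\pi\i}\oint_{\abs{z}=r}\frac{f(z)p(z)}{z^{n+1}}\,\mathrm{d}z$, so $n\abs{a_n}\le\frac{M}{2\pi r^n}\int_0^{2\pi}\abs{p(re^{\i\theta})}\,\mathrm{d}\theta$. The naive pointwise bound $\abs{p}\le(1-r)^{-s}$ would only give $\abs{a_n}=\BigO{n^{s-1}}$, which is weaker than $\BigO{1/n}$ and loses the sharp order; the resolution is to estimate the \emph{integral mean} of $\abs{p}$ instead of its supremum. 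By the classical Littlewood subordination theorem, $\int_0^{2\pi}\abs{p(re^{\i\theta})}\,\mathrm{d}\theta\le\int_0^{2\pi}\abs{\q(re^{\i\theta})}\,\mathrm{d}\theta=\int_0^{2\pi}\abs{1-re^{\i\theta}}^{-s}\,\mathrm{d}\theta$, and this mean increases in $r$ to the finite limit $\int_0^{2\pi}\abs{1-e^{\i\theta}}^{-s}\,\mathrm{d}\theta$, which is finite exactly because $s<1$ (the integrand behaves like $\abs{\theta}^{-s}$ near $\theta=0$). Letting $r\to1^-$ yields $n\abs{a_n}\le\frac{M}{2\pi}\int_0^{2\pi}\abs{1-e^{\i\theta}}^{-s}\,\mathrm{d}\theta$, that is, $\abs{a_n}=\BigO{1/n}$; the hypothesis $s<1/2$ comfortably guarantees both convergences (indeed $s<1$ already suffices).

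Finally, to see that the order $1/n$ is sharp it is enough to produce, for each index, a member of the class with a coefficient of that size. The functions $\Phi_{s,m}$ of \eqref{F} serve: $\Phi_{s,m}(z)=z+\frac{s}{m}z^{m+1}+\cdots$, so its $(m+1)$-st coefficient equals $s/m\asymp 1/(m+1)$; in the convex case the same role is played by $K_{s,m}$ from \eqref{K}, whose $(m+1)$-st coefficient is $\frac{s}{m(m+1)}\asymp 1/(m+1)^2$. The crux of the whole argument is the passage from the supremum of $\abs{p}$ to its integral mean via Littlewood's theorem; once the boundedness of $f$ and the finiteness of $\int_0^{2\pi}\abs{1-e^{\i\theta}}^{-s}\,\mathrm{d}\theta$ are in hand, the estimate follows.
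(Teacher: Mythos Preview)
Your argument is correct and, unlike the paper, self-contained. The paper's proof is a two-line appeal to external results: it observes that $\q=(1-z)^{-s}$ lies in the Hardy space $H^2$ precisely when $0<s<1/2$, and then invokes Theorem~4 of Ma--Minda \cite{MMin}, which converts an $H^2$ hypothesis on the subordinating function into the growth rate $\abs{a_n}=\BigO{1/n}$ (and $\BigO{1/n^2}$ in the convex case). Your approach instead proves boundedness of $f$ directly via the radial integral of $\Re p$, and then combines Cauchy's estimate for $na_n$ with Littlewood's subordination principle for the $L^1$ integral means of $p$. The payoff is twofold: the proof becomes elementary and transparent (no black box from \cite{MMin}), and it in fact works under the weaker assumption $\q\in H^1$, i.e.\ for the full range $0<s<1$, as you yourself remark; the paper's restriction to $s<1/2$ is an artefact of the $H^2$ hypothesis in the cited result. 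The reduction of the convex case to the starlike case via $zf'\in\s$ is the same in both proofs (the paper uses it implicitly through the Ma--Minda framework). Your sharpness discussion via $\Phi_{s,m}$ and $K_{s,m}$ is a bonus not present in the paper's proof.
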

\begin{proof}
From \cite[pp. 8]{MR}, the functions $1/(1-z)^s$ for $0<s<1/2$ is in the class $H^2$, the Hardy class of analytic functions in $\mathbb{D}$. Now, from the results in \cite[Theorem 4]{MMin} we conclude the theorem.
\end{proof}
Using properies of the functions $p\in \mathcal{P}(\q)$ we can obtain more satisfactory bounds for $|a_n|$.
\begin{thm}
Assume that the function $f$ of the form \eqref{A} belongs to the class $\mathcal{ST}_{hpl}(s)$. Then
\begin{equation}\label{coeff19}
 \abs{a_n}\leq  \frac{(s)_{n-1}}{(n-1)!}\qquad \myp{n=2, 3, 4, \dots}.
\end{equation}

\end{thm}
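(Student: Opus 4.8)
The plan is to convert the membership $f\in\s$ into a coefficient recurrence for the logarithmic derivative and then run an induction whose closing step rests on a Pochhammer identity. First I would set $p(z)=zf'(z)/f(z)=1+\sum_{k=1}^{\infty}c_kz^k$; since $f$ is univalent, $p$ is analytic with $p(0)=1$, and the defining subordination of $\s$ gives $p\prec\q$. Clearing denominators in $zf'(z)=p(z)f(z)$ and comparing coefficients of $z^{n}$ (with $a_1=1$) yields
\begin{equation*}
(n-1)a_n=\sum_{k=1}^{n-1}c_k\,a_{n-k}\qquad\myp{n\ge2}.
\end{equation*}
Observe that the target bound $(s)_{n-1}/(n-1)!$ is exactly the Taylor coefficient $q_{n-1}$ of $\q$ from \eqref{zq}, which is what makes the generating-function identity below so natural.

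The next step is the uniform bound $\abs{c_k}\le s$ for every $k\ge1$. For this I would first check that $\q$ is convex univalent: a direct computation gives $1+z\q''(z)/\q'(z)=(1+sz)/(1-z)$, and the M\"{o}bius image of $\mathbb{D}$ under $(1+sz)/(1-z)$ is the half-plane $\Re w>(1-s)/2\ge0$ for $0<s\le1$, so $\Re\myp{1+z\q''/\q'}>0$ and $\q$ is convex (hence univalent). Writing the subordination as $p-1\prec\q-1$, where $\q-1=\sum_{k\ge1}q_kz^k$ is convex univalent with leading coefficient $q_1=s$, Theorem \ref{Rog} immediately gives $\abs{c_k}\le\abs{q_1}=s$ for all $k$.

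With these ingredients I would prove $\abs{a_n}\le q_{n-1}=(s)_{n-1}/(n-1)!$ by strong induction, the base case $a_1=1=q_0$ being trivial. Assuming $\abs{a_m}\le q_{m-1}$ for $1\le m\le n-1$, the recurrence and $\abs{c_k}\le s$ give
\begin{equation*}
(n-1)\abs{a_n}\le s\sum_{m=1}^{n-1}\abs{a_m}\le s\sum_{j=0}^{n-2}q_j.
\end{equation*}
The decisive point is the identity $\sum_{j=0}^{m}q_j=(s+1)_m/m!$, read off as the coefficient of $z^m$ in $(1-z)^{-1}(1-z)^{-s}=(1-z)^{-s-1}$; taking $m=n-2$ and using $s\,(s+1)_{n-2}/(n-2)!=(s)_{n-1}/(n-2)!=(n-1)q_{n-1}$ turns the estimate into $(n-1)\abs{a_n}\le(n-1)q_{n-1}$, i.e.\ $\abs{a_n}\le q_{n-1}$, which closes the induction.

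I expect the main obstacle to be conceptual rather than computational: one must notice that the merely qualitative bound $\abs{c_k}\le s$, identical for every $k$, is already enough, because the telescoping Pochhammer identity makes the induction close with equality at each stage. The only genuine verification needed beforehand is the convexity of $\q$, which is what legitimizes invoking Rogosinski's theorem to control all the $c_k$ at once; everything after that is the routine induction displayed above.
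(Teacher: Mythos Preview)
Your proof is correct and follows essentially the same route as the paper: express $zf'/f$ as a Carath\'eodory-type function subordinate to $\q$, invoke Rogosinski's theorem to get $\abs{c_k}\le s$, and then run an induction on the recurrence $(n-1)a_n=\sum_{k=1}^{n-1}c_k a_{n-k}$, closing it via the Pochhammer partial-sum identity. Your version is actually slightly more complete than the paper's, since you explicitly verify that $\q$ is convex univalent (a hypothesis of Theorem~\ref{Rog} that the paper uses without checking), and your generating-function derivation of $\sum_{j=0}^{m}q_j=(s+1)_m/m!$ via $(1-z)^{-1}(1-z)^{-s}=(1-z)^{-s-1}$ is cleaner than the paper's direct computation.
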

\begin{proof}
Let $f\in \mathcal{ST}_{hpl}(s)$. Then,  from Definition \ref{def1}, there exists $p\in \mathcal{P}(\q)$ such that
\begin{equation}\label{eqthC01} p(z)=\frac{zf'(z)}{f(z)}=1+p_1z+p_2z^2+\dots \prec \q(z).\end{equation}
From the above relation we find the following relation between the coefficients of $f$ and $p$ as follows
\begin{equation}\label{eqthC3}
a_2=p_1 \quad \text{and} \quad na_n=p_{n-1}+a_2p_{n-2}+\cdots+a_{n-1}p_1+a_n \quad \myp{n=3, 4,  \dots}.
\end{equation}
Applying Theorem \ref{Rog} we deduce that $|p_n|\leq s \ \myp{n=1,2, 3, \dots}$,  then $|a_2|=|p_1|\le s$, moreover by \eqref{eqthC3}
\begin{equation}\label{eqthC4}
\abs{a_n} \leq \frac{s}{n-1}\myp{1+ \sum_{k=2}^{n-1}\abs{a_k}} \qquad \myp{n=3, 4,  \dots}.
\end{equation}
We next proceed by induction. Then, we have to prove that
\begin{equation}\label{eqthC4}
|a_{n+1}|\le \frac{(s)_n}{n!} \quad \myp{n=2, 3\ldots},
\end{equation}
assumming that the inequality $|a_n|\le (s)_{n-1}/(n-1)!$ holds for $n\ge 2$. Applying \eqref{eqthC4} we observe that
$$\begin{array}{rcl}|a_3|&\le & \dfrac{s}{2}(1+|a_2|)\ \le\  \dfrac{s}{2}(1+s)=\dfrac{s(s+1)}{2!},\\
|a_4|&\le& \dfrac{s}{3}\left(1+s+\dfrac{s(s+1)}{2!}\right)\ =\ \dfrac{s(s+1)(s+2)}{3!},\end{array}$$
and finally
$$\begin{array}{rcl}|a_{n+1}|&\le&\dfrac{s}{n}\left(1+s+\dfrac{s(s+1)}{2!}+\dfrac{s(s+1)(s+2)}{3!}+\cdots +\dfrac{s(s+1)\cdot ... \cdot (s+n-2)}{(n-1)!}\right)\\
&=& \dfrac{s(s+1)(s+2)\cdot ... \cdot (s+n-2)}{(n-1)!}.\end{array}$$
Therefore
$$|a_{n+1}|\le  \dfrac{(s)_n}{n!}, $$
and the assertion holds. Applying the Principle of Mathematical Induction, the formula \eqref{coeff19} is proved.\end{proof}
\begin{thm}\label{coeff11}
If $f$  of the form \eqref{A}  belongs to $\mathcal{CV}_{hpl}(s)$, then
\begin{equation}\label{coeff1}
 \abs{a_n}\leq  \frac{(s)_{n-1}}{n!}\qquad \myp{n=2, 3, 4, \dots}.
\end{equation}

\end{thm}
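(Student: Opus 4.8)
The plan is to reduce this statement to the coefficient bound already established for the starlike class $\s$ via the classical Alexander-type correspondence between convexity and starlikeness. Concretely, I would set $g(z):=zf'(z)$ and observe that
$$\frac{zg'(z)}{g(z)}=\frac{z\myp{f'(z)+zf''(z)}}{zf'(z)}=1+\frac{zf''(z)}{f'(z)}.$$
Hence the defining subordination $1+zf''/f'\prec\q$ for $\mathcal{CV}_{hpl}(s)$ is equivalent to $zg'/g\prec\q$, which is precisely the condition $g\in\s$. This transfers the hypothesis on $f$ into a hypothesis on $g$ belonging to a class whose coefficient estimate \eqref{coeff19} we have already proved.

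Next I would compare Taylor coefficients. Writing $f(z)=z+\sum_{n\ge2}a_nz^n$ gives $g(z)=zf'(z)=z+\sum_{n\ge2}na_nz^n$, so the $n$-th coefficient of $g$ is $b_n=na_n$. Applying \eqref{coeff19} to the function $g\in\s$ then yields
$$\abs{b_n}=n\abs{a_n}\le\frac{(s)_{n-1}}{(n-1)!}\qquad\myp{n=2,3,4,\dots}.$$

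Finally, dividing by $n$ and using $n\cdot(n-1)!=n!$ produces the desired estimate
$$\abs{a_n}\le\frac1n\cdot\frac{(s)_{n-1}}{(n-1)!}=\frac{(s)_{n-1}}{n!},$$
which is exactly \eqref{coeff1}. As for the main obstacle, there is essentially no hard computation: the result follows by a short chain of reductions. The single point that deserves care is verifying the Alexander correspondence \emph{within this specific setting}, namely that $1+zf''/f'\prec\q$ holds if and only if $zg'/g\prec\q$ for $g=zf'$. This rests only on the algebraic identity displayed above, so it is routine, but it is the hinge on which the whole argument turns.
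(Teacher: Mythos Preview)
Your proposal is correct and is precisely the paper's own argument: it invokes the Alexander correspondence $g=zf'\in\s$ whenever $f\in\mathcal{CV}_{hpl}(s)$, applies the bound \eqref{coeff19} to the coefficients $na_n$ of $g$, and divides by $n$. The only difference is that you spell out the identity $zg'/g=1+zf''/f'$ explicitly, whereas the paper simply cites the ``fundamental Alexander relation'' in one line.
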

\begin{proof} Let $f\in \mathcal{CV}_{hpl}(s)$. Using the fundamental Alexander relation between $\mathcal{CV}_{hpl}(s)$ and $\mathcal{ST}_{hpl}(s)$, we have  $zf'\in \mathcal{ST}_{hpl}(s)$, which gives that $na_n$ satisfies  \eqref{coeff19}, and then the assertion follows.\end{proof}

\begin{thm} \label{th:feket1}
Let $f\in \mathcal{ST}_{hpl}(s)$ be given by \eqref{A}. Then
\begin{equation}\label{Hankel1}
|H_2(2)|=\abs{a_2a_4-a_3^2}\leq \frac{s^2}{4}.\end{equation}
The inequality is sharp; equality holds if $f$ is a rotation of  $\Phi_{s,2}$, given by \eqref{F}.
\end{thm}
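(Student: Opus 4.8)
The plan is to pass from the subordination defining $\s$ to the Schwarz-function coefficients and reduce $\abs{H_2(2)}$ to a two-parameter extremal problem. First I would write $zf'/f=\q(\omega(z))$ for some $\omega(z)=w_1z+w_2z^2+\cdots\in\mathcal{B}$ and expand $\q(\omega(z))$ by means of \eqref{zq} to obtain the Carath\'{e}odory coefficients
\begin{align*}
p_1 &= s w_1,\\
p_2 &= s w_2 + \tfrac{s(s+1)}{2} w_1^2,\\
p_3 &= s w_3 + s(s+1) w_1 w_2 + \tfrac{s(s+1)(s+2)}{6} w_1^3.
\end{align*}
The recurrence \eqref{eqthC3} gives $a_2=p_1$, $a_3=\tfrac12(p_2+p_1^2)$ and $a_4=\tfrac13 p_3+\tfrac12 p_1 p_2+\tfrac16 p_1^3$, whence a short computation yields the compact form $a_2a_4-a_3^2=\tfrac13 p_1 p_3-\tfrac14 p_2^2-\tfrac1{12}p_1^4$. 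Substituting the $p_i$ and factoring out $s^2/4$ turns the claim $\abs{H_2(2)}\le s^2/4$ into $\abs{\Psi}\le 1$, where $\Psi$ is an explicit polynomial in $w_1,w_2,w_3$ carrying a single $w_1w_3$ term, a $w_1^2w_2$ term, a $-w_2^2$ term, and a $w_1^4$ term whose coefficient is $\tfrac{(13s+7)(1-s)}{36}$ (using the factorization $-13s^2+6s+7=(13s+7)(1-s)$).

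Next, exploiting the rotation invariance of $\s$ I would take $w_1=r\in[0,1]$ and invoke Lemma \ref{lem1} to write $w_2=\xi(1-r^2)$ and $w_3=(1-r^2)(1-\abs{\xi}^2)\zeta-r(1-r^2)\xi^2$ with $\abs{\xi}\le 1$, $\abs{\zeta}\le 1$. Since $\zeta$ enters only linearly through the $w_1w_3$ term, using $\abs{\zeta}\le 1$ together with $\abs{A\xi^2+B\xi+C}\le\abs{A}c^2+Bc+C$ (where $c=\abs{\xi}$ and $A=-(1-r^2)^2-\tfrac43 r^2(1-r^2)$, $B=\tfrac{s+1}{3}r^2(1-r^2)$, $C=\tfrac{(13s+7)(1-s)}{36}r^4$ are the real coefficients read off above) bounds $\abs{\Psi}$ by a quadratic $g(c)=\alpha c^2+\beta c+\gamma$ on $c\in[0,1]$. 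A direct simplification gives the leading coefficient $\alpha=\tfrac{(1-r^2)(1-r)(3-r)}{3}\ge 0$, so $g$ is convex on $[0,1]$ and attains its maximum at an endpoint; it therefore suffices to check $g(0)\le 1$ and $g(1)\le 1$.

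Finally I would dispose of the two endpoints. Writing $u=r^2$, the case $c=1$ reduces to $g(1)-1=\tfrac{s-1}{3}u-\tfrac{13s^2+6s+17}{36}u^2$, which is $\le 0$ for $0<s<1$ and $u\ge 0$ because both coefficients are negative; the case $c=0$ reduces to $\tfrac43 r(1-r^2)+\tfrac{(13s+7)(1-s)}{36}r^4$, each summand of which is bounded by its own maximum ($\tfrac{8\sqrt3}{27}$ and $\tfrac{25}{117}$, respectively), with sum comfortably below $1$. This establishes $\abs{H_2(2)}\le s^2/4$. Tracking equality, the maximum forces $r=0$ and $\abs{\xi}=1$, i.e. $\omega(z)=\xi z^2$ with $\abs{\xi}=1$, which corresponds exactly to a rotation of $\Phi_{s,2}$, confirming sharpness.

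The main obstacle I anticipate is not any single step but the bookkeeping in the endpoint analysis: the triangle-inequality bound decouples the phases of $\xi$ and $\xi^2$ and is thus potentially lossy, so one must verify it is still tight enough to recover the exact constant $s^2/4$. What makes the crude bound sufficient is that both $g(0)$ and $g(1)$ remain below (respectively, at most) $1$, with equality only in the degenerate case $r=0$; checking the sign of the quadratic in $u$ at $c=1$ is the one place where the specific coefficient $\tfrac{(13s+7)(1-s)}{36}$ genuinely enters.
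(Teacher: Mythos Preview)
Your proof is correct and follows essentially the same route as the paper: express $a_2,a_3,a_4$ via the Schwarz function $\omega$, substitute the parametrization of Lemma~\ref{lem1}, apply the triangle inequality, and optimize the resulting real function of $(r,\abs{\xi})$. The only cosmetic difference is that the paper notices the bounding quadratic in $\abs{\xi}$ is \emph{increasing} on $[0,1]$ (both terms of its derivative are nonnegative), so it checks only the endpoint $\abs{\xi}=1$, whereas you argue via convexity and therefore also verify the (harmless) endpoint $\abs{\xi}=0$; your $g(1)-1=\tfrac{s-1}{3}u-\tfrac{13s^2+6s+17}{36}u^2$ coincides with the paper's final one-variable expression after rescaling by $1/3$.
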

\begin{proof}
Let the function $f$ of the form \eqref{A} be in the class $\mathcal{ST}_{hpl}(s)$. Then there exists a function $\omega\in \mathcal{B}$, \ $\omega(z) = w_1 z+w_2z^2+\cdots$,  such that
\begin{equation}\label{eq:th12}
\frac{zf'(z)}{f(z)}=\frac{1}{\myp{1-\omega(z)}^s} \qquad \myp{z\in \mathbb{D}}.
\end{equation}
We have
\begin{equation*}
\frac{zf'(z)}{f(z)}=1+a_2z+\myp{2a_3-a_2^2}z^2+\myp{a_2^3-3a_2a_3+3a_4}z^3+\cdots,\end{equation*}
and
\begin{equation*}\begin{array}{rcl}
\dfrac{1}{\myp{1-\omega(z)}^s}=1+sw_1z&+&\left(sw_2+\dfrac{s(s+1)}{2}w_1^2\right)z^2\\
&+&\left(sw_3+s(s+1)w_1w_2+\dfrac{s(s+1)(s+2)}{6}w_1^3\right)z^3+\cdots.\end{array}
\end{equation*}
Comparing the coefficients of $z$, $z^2$ and $z^3$  of both sides of the series expansion of \eqref{eq:th12}, we obtain
\begin{equation}\label{coffiecient}
a_2 =s w_1,
a_3 = \dfrac{s}{2}\myp{w_2+\dfrac{3s+1}{2}w_1^2},
a_4 = \dfrac{s}{3}\myp{w_3+\dfrac{5s+2}{2}w_1w_2+\dfrac{17s^2+15s+4}{12}w_1^3}.
\end{equation}
Hence, we have
\begin{equation*}
|H_2(2)|=\abs{a_2a_4-a_3^2}=\frac{s^2}{12}\abs{4w_1w_3-3w_2^2+(s+1)w_1^2w_2+\frac{7+6s-13s^2}{12}w_1^4}.
\end{equation*}
Using Lemma \ref{lem1}, we write the expression $w_2$ and $w_3$ in terms of $w_1$. Since the class and $H_2(2)$ are invariant under the rotation, then without loss of generality  we can assume that $x=w_1$, with $0\leq x\leq 1$. Thus
\begin{multline*}
|H_2(2)|=\frac{s^2}{12}\biggl|4x\myp{1-x^2}\myp{1-|y|^2}\zeta-4x^2\myp{1-x^2}y^2-3y^2\myp{1-x^2}^2\\+(s+1)x^2y\myp{1-x^2}
+\frac{7+6s-13s^2}{12}x^4\biggr|
\end{multline*}
where $y , \zeta$ are complex number with $|y|\le 1$ and $|\zeta|\le 1$. Using the triangle inequality, we obtain
\begin{multline*}
|H_2(2)|\leq{}\frac{s^2}{12}\biggl\{4x\myp{1-x^2}\myp{1-|y|^2}+4x^2\myp{1-x^2}|y|^2+3|y|^2\myp{1-x^2}^2\\+(s+1)x^2|y|\myp{1-x^2}
+\frac{7+6s-13s^2}{12}x^4\biggr\}.
\end{multline*}
Now, we begin by analyzing a behavior of $h(|y|)$, that represent an expression in a curly brackets of the right hand side of the above inequality. Since \[
h'(|y|)=2\myp{x-1}^2\myp{x+1}\myp{3-x}|y|+(s+1)x^2\myp{1-x^2}\geq 0
\]
  the function $h(|y|)$ is increasing  on the interval  $[0,1]$ so that $h(|y|)$ attains its  greatest value at $|y|=1$, i.e. $h(|y|)\leq h(1)$. Consequently
\begin{align*}
|H_2(2)|\leq \frac{s^2}{12}\set{-\frac{13s^2+6s+17}{12}x^4+(s-1)x^2+3}\leq \frac{s^2}{4}.
\end{align*}
The function $\Phi_{s,2}$ with its rotation, where $\Phi_{s,2}$ is given by \eqref{F}, shows that the bound $s^2/4$ is sharp.
\end{proof}
\begin{thm} Let $h\in \mathcal{CV}_{hpl}(s)$, and $h(z) = z+d_2z^2+d_3z^3+\cdots$. Then
\begin{equation}\label{Hankel2}
|H_2(2)|=\abs{d_2d_4-d_3^2}\leq\dfrac{s^2}{36}\left(1+\frac{9s^2}{8(s^2+3s+6)} \right). \end{equation}
\end{thm}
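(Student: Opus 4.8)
The plan is to exploit the Alexander-type relation already used in Theorem \ref{coeff11}, which lets me recycle the coefficient formulas \eqref{coffiecient} instead of re-expanding $1+zh''/h'$ from scratch. Setting $g(z)=zh'(z)=z+a_2z^2+a_3z^3+\cdots$, the defining subordination for $\mathcal{CV}_{hpl}(s)$ becomes $zg'/g\prec\q$, so $g\in\mathcal{ST}_{hpl}(s)$ and $a_n=n\,d_n$; in particular $d_2=a_2/2$, $d_3=a_3/3$ and $d_4=a_4/4$. Consequently
\[
d_2d_4-d_3^2=\frac{a_2a_4}{8}-\frac{a_3^2}{9},
\]
which is a genuinely different functional from the starlike one and so must be computed afresh. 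Substituting \eqref{coffiecient}, in which the $a_j$ are expressed through the Schwarz coefficients $w_1,w_2,w_3$, and collecting terms, I expect to arrive at
\[
d_2d_4-d_3^2=\frac{s^2}{72}\left(3w_1w_3-2w_2^2+\frac{3s+2}{2}w_1^2w_2+\frac{2+3s-s^2}{4}w_1^4\right).
\]

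Next I would invoke Lemma \ref{lem1} to write $w_2=\xi(1-w_1^2)$ and $w_3=(1-w_1^2)(1-|\xi|^2)\zeta-w_1(1-w_1^2)\xi^2$ with $|\xi|,|\zeta|\le1$. Since both the class $\mathcal{CV}_{hpl}(s)$ and $H_2(2)$ are rotation invariant, I may assume $w_1=x\in[0,1]$ and set $y=\xi$, $r=|y|$. Noting that $2+3s-s^2>0$ for $0<s\le1$, the triangle inequality gives $|H_2(2)|\le\frac{s^2}{72}F(x,r)$, where
\[
F(x,r)=3x(1-x^2)(1-r^2)+3x^2(1-x^2)r^2+2r^2(1-x^2)^2+\frac{3s+2}{2}x^2r(1-x^2)+\frac{2+3s-s^2}{4}x^4.
\]

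The crux of the argument, mirroring the monotonicity step in Theorem \ref{th:feket1}, is to control $F$ in the variable $r$ for fixed $x$. Differentiating, I expect the factorization
\[
\frac{\partial F}{\partial r}=(1-x^2)\left(2r(x-1)(x-2)+\frac{3s+2}{2}x^2\right),
\]
and since $(x-1)(x-2)\ge0$ on $[0,1]$, both the outer factor and the bracketed sum are nonnegative, so $F(x,\cdot)$ is nondecreasing and attains its maximum at $r=1$. This sign analysis over the whole square $0\le x,r\le1$ is the main obstacle: it is essential that the cross term $\frac{3s+2}{2}x^2r(1-x^2)$ enters with a positive coefficient, which is exactly what forces $\partial_rF\ge0$. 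Evaluating at $r=1$ collapses $F$ to a function of $x$ alone,
\[
F(x,1)=2+\frac{3s}{2}x^2-\frac{s^2+3s+6}{4}x^4.
\]

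Finally I would maximize $F(x,1)$ over $x\in[0,1]$. Writing $u=x^2\in[0,1]$ turns this into a downward parabola $2+\frac{3s}{2}u-\frac{s^2+3s+6}{4}u^2$, whose vertex sits at $u^{*}=\frac{3s}{s^2+3s+6}$. A one-line check that $s^2+3s+6>3s$ confirms $u^{*}<1$, so the vertex is interior and the maximum value equals $2+\frac{9s^2}{4(s^2+3s+6)}$. Therefore
\[
|H_2(2)|\le\frac{s^2}{72}\left(2+\frac{9s^2}{4(s^2+3s+6)}\right)=\frac{s^2}{36}\left(1+\frac{9s^2}{8(s^2+3s+6)}\right),
\]
which is the asserted bound; note that the statement makes no sharpness claim, so no extremal function need be exhibited.
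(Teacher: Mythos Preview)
Your proposal is correct and follows essentially the same route as the paper's own proof: express $d_2,d_3,d_4$ in terms of the Schwarz coefficients, substitute Lemma~\ref{lem1}, use rotation invariance to take $w_1=x\in[0,1]$, show the resulting bound is nondecreasing in $|y|$, and maximize the remaining quartic in $x$. The only cosmetic difference is that you obtain the $d_j$ via the Alexander relation $d_n=a_n/n$ and the formulas \eqref{coffiecient}, whereas the paper re-expands $1+zh''/h'$ directly; both lead to the identical expression (your $\frac{s^2}{72}$ normalization is just the paper's $\frac{s^2}{36}$ with each bracketed term halved).
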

\begin{proof}
Let the function $h(z) = z+d_2z^2+d_3z^3+\cdots$ be in the class $\mathcal{CV}_{hpl}(s)$. Then there exists a function $\omega\in \mathcal{B}$, \ $\omega(z) = w_1 z+w_2z^2+\cdots$,  such that
\begin{equation}\label{eq:coffecition3}
1+\dfrac{zh''(z)}{h'(z)}=\dfrac{1}{\myp{1-\omega(z)}^s} \qquad \myp{z\in \mathbb{D}}.
\end{equation}
We have
\begin{equation}\label{eq:coffecition4}
1+\dfrac{zh''(z)}{h'(z)}=1+2d_2z+\myp{6d_3-4d_2^2}z^2+\myp{12d_4-18d_2d_3+8d_2^3}z^3+\cdots
\end{equation}
  Comparing the coefficients of the both sides of the series expansion of \eqref{eq:coffecition3}, we obtain
\begin{equation}\label{coffiecientK}
\begin{array}{ll}
d_2 &=\dfrac{s w_1}{2},\\[1em]
d_3 &= \dfrac{s}{6}\myp{w_2+\dfrac{3s+1}{2}w_1^2},\\[1em]
d_4 &= \dfrac{s}{12}\myp{w_3+\dfrac{5s+2}{2}w_1w_2+\dfrac{17s^2+15s+4}{12}w_1^3}.
\end{array}
\end{equation}
Thus, we have
\begin{equation*}
\abs{d_2d_4-d_3^2}=\dfrac{s^2}{36}\abs{\dfrac{3}{2}w_1w_3-w_2^2+\dfrac{3s+2}{4}w_1^2w_2+\dfrac{2+3s-s^2}{8}w_1^4}.
\end{equation*}
Using Lemma \ref{lem1}, we write the expression $w_2$ and $w_3$ in terms of $w_1$. Since the class and $H_2(2)$ are invariant under the rotation, then without loss of generality  we can assume that $x=w_1$, with $0\leq x\leq 1$. Thus
\begin{multline*}
\abs{d_2d_4-d_3^2}=\frac{s^2}{36}\biggl|\dfrac32x\myp{1-x^2}\myp{1-|y|^2}\zeta-\dfrac32x^2\myp{1-x^2}y^2-y^2\myp{1-x^2}^2\\+\dfrac{3s+2}{4}x^2y\myp{1-x^2}
+\frac{2+3s-s^2}{8}x^4\biggr|
\end{multline*}
where $y , \zeta$ are complex number with $|y|\le 1$ and $|\zeta|\le 1$. Using the triangle inequality, we obtain
\begin{multline*}
\abs{d_2d_4-d_3^2}\le\frac{s^2}{36}\biggl|\dfrac32x\myp{1-x^2}\myp{1-|y|^2}-\dfrac32x^2\myp{1-x^2}|y|^2-|y|^2\myp{1-x^2}^2\\+\dfrac{3s+2}{4}x^2|y|\myp{1-x^2}
+\frac{2+3s-s^2}{8}x^4\biggr|.
\end{multline*}
Now, we begin by analyzing a behavior of $h(|y|)$, that represent an expression in a curly brackets of the right hand side of the above inequality. Since \[
h'(|y|)=\myp{1-x}^2\myp{x+1}\myp{2-x}|y|+\dfrac{3s+2}{4}x^2\myp{1-x^2}\geq 0
\]
  the function $h(|y|)$ is increasing  on the interval  $[0,1]$ so that $h(|y|)$ attains its  greatest value at $|y|=1$, i.e., $h(|y|)\leq h(1)$. Consequently
\begin{align*}
\abs{d_2d_4-d_3^2}\leq \frac{s^2}{36}\set{1+\dfrac{3s}{4}x^2-\frac{s^2+3s+6}{8}x^4}=:g(x).
\end{align*}
Since $g(x)$  attains its maximum at $x=(3s)/(s^2+3s+6)$, thus \eqref{Hankel2} follows. 
\end{proof}
\begin{thm}\label{thFS}
Let $f\in \mathcal{ST}_{hpl}(s)$ be given by \eqref{A}. Then the following  sharp inequalities for Fekete-Szeg\"{o} functional hold
\[\abs{a_3-\lambda a_2^2} \leq \left\{
\begin{array}{lcl}
	s^2\myp{\lambda-\dfrac{3s+1}{4s}}&\textit{for}&\lambda \ge \dfrac{3s+3}{4s},\\[1em]
	\dfrac{s}{2}&\textit{for}& \dfrac{3s-1}{4s}\leq \lambda \leq \dfrac{3s+3}{4s}, \\[1em]
	-s^2\myp{\lambda-\dfrac{3s+1}{4s}}&\textit{for}& \lambda \le \dfrac{3s-1}{4s}.
\end{array}\right.
\]
\end{thm}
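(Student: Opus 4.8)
My plan is to reduce the Fekete--Szeg\"{o} functional to a single Schwarz-function coefficient expression of the form $\abs{w_2-t\,w_1^2}$ and then invoke Lemma \ref{lemma1} directly. The essential input is already available: since $f\in\mathcal{ST}_{hpl}(s)$, there is a Schwarz function $\omega(z)=w_1z+w_2z^2+\cdots\in\mathcal{B}$ satisfying \eqref{eq:th12}, and comparing coefficients as in the proof of Theorem \ref{th:feket1} yields \eqref{coffiecient}, in particular
\[
a_2=s\,w_1,\qquad a_3=\frac{s}{2}\myp{w_2+\frac{3s+1}{2}\,w_1^2}.
\]
These are exactly the two relations I need, so no new coefficient computation is required beyond what has already been carried out.

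The key algebraic step is to substitute these into $a_3-\lambda a_2^2$ and factor out $s/2$. A direct calculation gives
\[
a_3-\lambda a_2^2=\frac{s}{2}\myp{w_2+\frac{3s+1}{2}w_1^2}-\lambda s^2w_1^2
=\frac{s}{2}\myp{w_2-t\,w_1^2},\qquad t:=2\lambda s-\frac{3s+1}{2}.
\]
Thus $\abs{a_3-\lambda a_2^2}=\frac{s}{2}\abs{w_2-t\,w_1^2}$, and the whole problem collapses to estimating $\abs{w_2-t\,w_1^2}$ for the Schwarz function $\omega$. I expect no genuine obstacle here; the only care needed is in tracking the linear change of variable between $\lambda$ and $t$ correctly.

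To finish, I apply Lemma \ref{lemma1} with this value of $t$ and translate its three regimes back into conditions on $\lambda$. Solving $t\ge 1$ gives $\lambda\ge\frac{3s+3}{4s}$, solving $t\le -1$ gives $\lambda\le\frac{3s-1}{4s}$, and $-1\le t\le 1$ corresponds to the middle interval $\frac{3s-1}{4s}\le\lambda\le\frac{3s+3}{4s}$. In the first regime Lemma \ref{lemma1} gives $\abs{w_2-t\,w_1^2}\le t$, so $\abs{a_3-\lambda a_2^2}\le\frac{s}{2}t=s^2\myp{\lambda-\frac{3s+1}{4s}}$; in the third regime the bound is $-t$, yielding $-s^2\myp{\lambda-\frac{3s+1}{4s}}$; and in the middle regime the bound is $1$, giving $s/2$. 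This reproduces the three cases of the statement exactly.

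For sharpness I will use the equality characterizations in Lemma \ref{lemma1}. When $t>1$ or $t<-1$, equality forces $\omega(z)=z$ (up to rotation), which corresponds to the extremal starlike function $\Phi_s=\Phi_{s,1}$ of \eqref{F1}; when $-1<t<1$, equality forces $\omega(z)=z^2$ (up to rotation), corresponding to $\Phi_{s,2}$ from \eqref{F}; and the two boundary values $t=\pm 1$ are realized by the explicit extremal Schwarz functions $\pm z\frac{x+z}{1+xz}$ listed in the lemma. In each case the corresponding member of $\mathcal{ST}_{hpl}(s)$ attains the stated bound, so all three inequalities are sharp.
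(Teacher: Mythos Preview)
Your proposal is correct and follows essentially the same route as the paper: you substitute the coefficient relations \eqref{coffiecient} into $a_3-\lambda a_2^2$, factor out $s/2$ to obtain $\frac{s}{2}\abs{w_2-t\,w_1^2}$ with $t=2s\lambda-\frac{3s+1}{2}$, and then apply Lemma \ref{lemma1} and translate the three regimes back to $\lambda$. Your sharpness discussion via $\Phi_{s,1}$, $\Phi_{s,2}$, and the boundary Schwarz functions $\pm z\frac{x+z}{1+xz}$ also mirrors the paper's extremal functions.
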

\begin{proof}
Form \eqref{coffiecient}, we have
\begin{equation*}
\abs{a_3-\lambda a_2^2}=\dfrac{s}{2}\abs{w_2-w_1^2\myp{2s\lambda-\dfrac{3s+1}{2}}}.
\end{equation*}
The assertion is proved by the application of  Lemma \ref{lemma1} with $ t=2s\lambda-(3s+1)/2$. Equality is attained in the case $\myp{3s-1}/\myp{4s}< \lambda < \myp{3s+3}/\myp{4s}$ for $f(z)=\overline{\mu} \Phi_{s,2}(\mu z)$, where $\Phi_{s,n}$ is  given by \eqref{F}, and by the function $f(z)=\overline{\mu} \Phi_{s,1}(\mu z)\ $  for $\lambda > \myp{3s+3}/\myp{4s}$ or $\lambda<\myp{3s-1}/\myp{4s} $ (here $\mu$ is an unimodular constant). For $x\in[0,1]$ we define the normalized analytic functions $f_x$ and $g_x$ by
\begin{equation}\label{fg}
\dfrac{zg'_x(z)}{g_x(z)}=\mathfrak{q}_{s}\myp{\dfrac{z(z+x)}{1+xz}}\quad \textit{and}\quad\dfrac{zf'_x(z)}{f_x(z)}=\mathfrak{q}_{s}\myp{-\dfrac{z(z+x)}{1+xz}}
\end{equation}
When $\lambda =\myp{3s+3}/\myp{4s}$  equality holds   for $f(z)=\overline{\mu} f_x(\mu z)$. If  $\lambda=\myp{3s-1}/\myp{4s}$  equality holds   for $f(z)=\overline{\mu} g_x(\mu z)$, where $|\mu|=1$.
\end{proof}


\begin{thm}\label{th.3.7}
Let $h\in \mathcal{CV}_{hpl}(s)$, and $h(z) = z+d_2z^2+d_3z^3+\cdots$. Then we have sharp inequalities
\[\abs{d_3-\lambda d_2^2} \leq   \left\{
\begin{array}{lcl}
	-\dfrac{s^2}{4}\myp{\lambda-\dfrac{3s+1}{3s}}&\textit{for}&\lambda \le \dfrac{3s-1}{3s},\\[1em]
	\dfrac{s}{6}&\textit{for}& \dfrac{3s-1}{3s}\leq \lambda \leq \dfrac{s+1}{s}, \\[1em]
	\dfrac{s^2}{4}\myp{\lambda-\dfrac{3s+1}{3s}}&\textit{for}& \lambda \ge \dfrac{s+1}{s}.
\end{array}\right.
\]
\end{thm}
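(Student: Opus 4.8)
The plan is to mirror the argument of Theorem~\ref{thFS}, replacing the starlike coefficient data with the convex data already recorded in \eqref{coffiecientK}. Starting from a Schwarz function $\omega\in\mathcal{B}$ with $1+zh''(z)/h'(z)=\q(\omega(z))$, I would use $d_2=sw_1/2$ and $d_3=\frac{s}{6}\myp{w_2+\frac{3s+1}{2}w_1^2}$, form the combination $d_3-\lambda d_2^2$, and factor out $s/6$. A direct computation gives
\[
d_3-\lambda d_2^2=\frac{s}{6}\myp{w_2-t\,w_1^2},\qquad t=\frac{3s\lambda-3s-1}{2},
\]
so the whole problem collapses to estimating the single modulus $\abs{w_2-t w_1^2}$ for a Schwarz function.

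The next step is to apply Lemma~\ref{lemma1} with this value of $t$. The three regimes $t\le-1$, $-1\le t\le1$, $t\ge1$ correspond exactly to the three intervals in the statement: solving $t=-1$ yields $\lambda=(3s-1)/(3s)$, and solving $t=1$ yields $\lambda=(s+1)/s$. Multiplying the bounds of Lemma~\ref{lemma1} by $s/6$ and substituting back the value of $t$ produces $\frac{s}{6}\abs{t}$ in the two outer ranges, which I would simplify after clearing the common factor $3s$ to the stated form $\frac{s^2}{4}\abs{\lambda-(3s+1)/(3s)}$, while the middle range gives the constant $\frac{s}{6}$. The sign split in the two outer branches simply tracks whether $t$ is negative (for $\lambda\le(3s-1)/(3s)$) or positive (for $\lambda\ge(s+1)/s$), so each branch of the claimed inequality matches.

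For sharpness I would invoke the equality clause of Lemma~\ref{lemma1}. In the interior range equality forces $\omega(z)=z^2$, which produces the function $K_{s,2}$ of \eqref{K}; in the outer ranges $\omega(z)=z$ gives $K_{s}=K_{s,1}$ of \eqref{K1}; and at the two endpoints $\lambda=(3s-1)/(3s)$ and $\lambda=(s+1)/s$ the extremal Schwarz functions $\pm z(x+z)/(1+xz)$ yield, via $1+zh''/h'=\q(\omega)$, boundary functions entirely analogous to the $f_x,g_x$ of \eqref{fg}. Alternatively, the Alexander relation exploited in Theorem~\ref{coeff11} transfers the starlike extremals of Theorem~\ref{thFS} directly to the convex setting. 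I expect no genuine analytic obstacle here: the computation is routine once the coefficient formulas \eqref{coffiecientK} are in hand, and the only real care needed is the algebraic verification that $\frac{s}{6}\abs{t}$ reduces to $\frac{s^2}{4}\abs{\lambda-(3s+1)/(3s)}$ and that the transition abscissae $t=\pm1$ translate correctly into the $\lambda$-thresholds.
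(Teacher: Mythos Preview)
Your proposal is correct and follows essentially the same route as the paper: both compute $d_3-\lambda d_2^2=\frac{s}{6}\bigl(w_2-\frac{3s\lambda-3s-1}{2}\,w_1^2\bigr)$ from \eqref{coffiecientK} and then apply Lemma~\ref{lemma1}, with the same extremal functions $K_{s,2}$, $K_s$, and the convex analogues $F_x,G_x$ of \eqref{fg} (defined in the paper via $1+zh''/h'=\q(\pm z(x+z)/(1+xz))$). Your verification of the threshold values and the simplification $\frac{s}{6}\abs{t}=\frac{s^2}{4}\abs{\lambda-(3s+1)/(3s)}$ is more explicit than the paper's, but the argument is the same.
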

\begin{proof}Reasoning along the same line as in the proof of Theorem \ref{thFS}, we get
$$|d_3-\lambda d_2^2|=\dfrac{s}{6}\left|w_2-\dfrac{3s\lambda-3s-1}{2}w_1^2\right|.$$
Setting $ \mu=\dfrac{3s\lambda-3s-1}{2}$ in Lemma \ref{lemma1} we obtain the assertion.
The inequalities are sharp for the functions
\begin{equation*}f(z)= \left\{
\begin{array}{ll}
        \overline{\mu} K_{s,2}(\mu z)\quad &\textit{for}\quad \dfrac{3s-1}{3s}< \lambda < \dfrac{s+1}{s},\\[0.5em]
       \overline{\mu} K_{s}(\mu z)\quad &\textit{for}\quad \lambda \in\myp{-\infty, \dfrac{3s-1}{3s}}\cup\myp{\dfrac{s+1}{s},\infty},\\[0.5em]
  \overline{\mu}F_x(\mu z)    \quad &\textit{for}\quad\lambda=\dfrac{s+1}{s},\\[0.5em]
   \overline{\mu}G_x(\mu z)    \quad &\textit{for}\quad\lambda = \dfrac{3s-1}{3s},
\end{array}\right.
\end{equation*}
where $K_{s,2}$ and  $K_{s}$ are given by \eqref{K} and \eqref{K1},  $\mu$ is an unimodular constant and the functions $F_x$ and $G_x\ \myp{0\le x\le 1}$ are defined by
\begin{equation}\label{fgx}
1+\dfrac{zG''_x(z)}{G'_x(z)}=\mathfrak{q}_{s}\myp{\dfrac{z(z+x)}{1+xz}}\quad \textit{and}\quad 1+\dfrac{zF''_x(z)}{F'_x(z)}=\mathfrak{q}_{s}\myp{-\dfrac{z(z+x)}{1+xz}}.
\end{equation}
\end{proof}
Now, we find the bounds of the Fekete-Szeg\"{o} functionals of $z/f(z)$ and $f^{-1}(z)$ when $f$ is an element of the class
$\mathcal{ST}_{hpl}(s)$ or $\mathcal{CV}_{hpl}(s)$.

Let the function $F$ be defined by
\begin{equation}\label{eq41}
F(z)= \dfrac{z}{f(z)}=1+\sum_{n=1}^{\infty}b_nz^n \quad \myp{z\in\mathbb{D}},
\end{equation}
for $f\in \mathcal{S}$ given by \eqref{A}.

\begin{thm}Let $f\in \mathcal{ST}_{hpl}(s)$,  and let $F(z)=z/f(z)$ be of the form \eqref{eq41}. Then the following sharp inequalities hold
\[\abs{b_2-\lambda b_1^2} \leq \left\{
\begin{array}{lcl}
	s^2\myp{\dfrac{s-1}{4s}-\lambda}&\textit{for}& \lambda \le \dfrac{s-3}{4s},\\[1em]
	\dfrac{s}{2} &\textit{for}& \dfrac{s-3}{4s}\leq \lambda \leq \dfrac{s+1}{4s}, \\[1em]
	s^2\myp{\lambda-\dfrac{s-1}{4s}}&\textit{for}& \lambda \ge \dfrac{s+1}{4s}.
\end{array}\right.
\]
\end{thm}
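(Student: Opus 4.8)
The plan is to mirror the proof of Theorem~\ref{thFS}: reduce the functional $b_2-\lambda b_1^2$ to an expression of the shape $\frac{s}{2}\abs{w_2-t\,w_1^2}$ for a suitable real $t=t(\lambda)$, and then read off the three bounds from Lemma~\ref{lemma1}. First I would relate the coefficients of $F(z)=z/f(z)$ to those of $f$. Writing $F(z)=(1+a_2z+a_3z^2+\cdots)^{-1}$ and collecting powers of $z$ gives
\[
b_1=-a_2,\qquad b_2=a_2^2-a_3.
\]

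Next I would invoke the fact that $f\in\s$ produces a Schwarz function $\omega(z)=w_1z+w_2z^2+\cdots$ with $zf'/f=\q(\omega)$, so that the representations $a_2=sw_1$ and $a_3=\frac{s}{2}\myp{w_2+\frac{3s+1}{2}w_1^2}$ of \eqref{coffiecient} are available. Substituting these, the coefficient of $w_1^2$ in $b_2=a_2^2-a_3$ collapses to $s(s-1)/4$, giving $b_2=-\frac{s}{2}w_2+\frac{s(s-1)}{4}w_1^2$; combining this with $b_1^2=s^2w_1^2$ yields
\[
b_2-\lambda b_1^2=-\frac{s}{2}\myp{w_2-t\,w_1^2},\qquad t=\frac{s-1}{2}-2s\lambda,
\]
whence $\abs{b_2-\lambda b_1^2}=\frac{s}{2}\abs{w_2-t\,w_1^2}$.

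Finally I would apply Lemma~\ref{lemma1} to the right–hand side. The regimes $t\ge 1$, $-1\le t\le 1$, $t\le -1$ give the bounds $\frac{s}{2}t$, $\frac{s}{2}$, $-\frac{s}{2}t$; rewriting $\frac{s}{2}(\pm t)$ in terms of $\lambda$ produces $s^2\myp{\frac{s-1}{4s}-\lambda}$ and $s^2\myp{\lambda-\frac{s-1}{4s}}$, exactly as stated. Solving $t\ge 1$ and $t\le -1$ for $\lambda$ supplies the cut-points $\lambda=\frac{s-3}{4s}$ and $\lambda=\frac{s+1}{4s}$.

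The computation is routine; the one point demanding care is the bookkeeping of signs. Here the coefficient of $w_2$ in $b_2$ is $-s/2$ (rather than the $+s/2$ of Theorem~\ref{thFS}), which flips the sign of the $w_2$ term and hence \emph{reverses} the monotonicity of $t$ in $\lambda$. Consequently the largest-$\lambda$ regime now corresponds to $t\le -1$ and the smallest-$\lambda$ regime to $t\ge 1$, interchanging the outer two cases relative to Theorem~\ref{thFS}; I would verify that the thresholds flip accordingly. Sharpness follows, as in Theorem~\ref{thFS}, by transporting the extremal Schwarz functions of Lemma~\ref{lemma1} through the relation $F=z/f$.
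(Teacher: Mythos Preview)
Your argument is correct and is essentially the paper's proof unwound one layer: from $b_1=-a_2$, $b_2=a_2^2-a_3$ the paper simply observes $\abs{b_2-\lambda b_1^2}=\abs{a_3-(1-\lambda)a_2^2}$ and invokes Theorem~\ref{thFS} with $1-\lambda$ in place of $\lambda$, whereas you substitute the Schwarz--coefficient expressions for $a_2,a_3$ and apply Lemma~\ref{lemma1} directly. The two routes coincide once you note that your parameter $t=\tfrac{s-1}{2}-2s\lambda$ equals the $t$ of Theorem~\ref{thFS} evaluated at $1-\lambda$; the paper's version is marginally cleaner since it recycles the earlier theorem (and its extremal functions) wholesale, but there is no substantive difference.
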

\begin{proof}
For given $f\in \mathcal{ST}_{hpl}(s)$ with the power series \eqref{A} let $F(z)=\dfrac{z}{f(z)}$. An easy computation shows that
\begin{equation}\label{eq42}
F(z)=\dfrac{z}{f(z)}=1-a_2z+\myp{a_2^2-a_3}z^2+\cdots \qquad\myp{z\in \mathbb{D}}.
\end{equation}
Equating \eqref{eq41} and \eqref{eq42}, we obtain
\begin{equation}\label{coffiecient1}
b_1 =-a_2,\quad
b_2 = a_2^2-a_3,
\end{equation}
and then 
\begin{equation*}
\abs{b_2-\lambda b_1^2}=\abs{a_3-(1-\lambda)a_2^2}.
\end{equation*}
Assertion now follows by application of Theorem \ref{thFS} with $1-\lambda$.
The inequalities are sharp, the equality holds for the functions
\begin{equation*}f(z)= \left\{
\begin{array}{ll}
        \overline{\mu} \Phi_{s,2}(\mu z)\quad &\textit{for}\quad \dfrac{s-3}{4s}< \lambda < \dfrac{s+1}{4s},\\[0.5em]
       \overline{\mu} \Phi_{s}(\mu z)\quad &\textit{for}\quad \lambda \in\myp{-\infty, -\dfrac{s-3}{4s}}\cup\myp{\dfrac{s+1}{4s},\infty},\\[0.5em]
  \overline{\mu}f_x(\mu z)    \quad &\textit{for}\quad\lambda=\dfrac{s-3}{4s},\\[0.5em]
   \overline{\mu}g_x(\mu z)    \quad &\textit{for}\quad\lambda = \dfrac{s+1}{4s},
\end{array}\right.
\end{equation*}
where $\Phi_{s,2}$ and  $\Phi_{s}$ are given by \eqref{F} and \eqref{F1},  $\mu$ is an unimodular constant and the functions $f_x$ and $g_x\ \myp{0\le x\le 1}$ are given by \eqref{fg}.
\end{proof}
\begin{thm}
Let $h\in \mathcal{CV}_{hpl}(s)$  and $h(z)=z+d_2z^2+d_3z^3+\cdots$. Also, let $z/h(z)=1+\sum_{n=1}^{\infty}h_nz^n$. Then the following sharp inequalities hold.
\[\abs{h_2-\lambda h_1^2} \leq \left\{
\begin{array}{lcl}
	\dfrac{s^2}{4}\myp{\lambda+\dfrac{1}{3s}}&\textit{for}& \lambda \ge \dfrac{1}{3s},\\[1em]
	\dfrac{s}{6} &\textit{for}& -\dfrac{1}{s}\leq \lambda \leq \dfrac{1}{3s}, \\[1em]
	-\dfrac{s^2}{4}\myp{\lambda+\dfrac{1}{3s}}&\textit{for}& \lambda \le -\dfrac{1}{s}.
\end{array}\right.
\]
\end{thm}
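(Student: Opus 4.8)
The plan is to reduce this statement to Theorem \ref{th.3.7} exactly as the preceding theorem (for $z/f(z)$ with $f\in\mathcal{ST}_{hpl}(s)$) was reduced to Theorem \ref{thFS}. The whole argument hinges on a single algebraic identity relating the Fekete-Szeg\"{o} functional of $z/h(z)$ to that of $h$ itself.

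First I would expand the reciprocal power series. Writing $h(z)=z+d_2z^2+d_3z^3+\cdots$ and inverting, one finds
\[
\dfrac{z}{h(z)}=1-d_2z+\myp{d_2^2-d_3}z^2+\cdots,
\]
so that $h_1=-d_2$ and $h_2=d_2^2-d_3$. Substituting these into the target functional gives
\[
\abs{h_2-\lambda h_1^2}=\abs{d_2^2-d_3-\lambda d_2^2}=\abs{d_3-(1-\lambda)d_2^2}.
\]
Thus the problem collapses to the ordinary convex Fekete-Szeg\"{o} functional with the parameter $\lambda$ replaced by $1-\lambda$, and I would then invoke Theorem \ref{th.3.7} directly.

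The only genuine bookkeeping is translating the three piecewise branches of Theorem \ref{th.3.7} under the substitution $\lambda\mapsto 1-\lambda$. The threshold $\lambda\le(3s-1)/(3s)$ in Theorem \ref{th.3.7} becomes $1-\lambda\le(3s-1)/(3s)$, i.e. $\lambda\ge 1/(3s)$, and the branch value $-\tfrac{s^2}{4}\myp{(1-\lambda)-\tfrac{3s+1}{3s}}$ simplifies to $\tfrac{s^2}{4}\myp{\lambda+\tfrac{1}{3s}}$; similarly the threshold $\lambda\ge(s+1)/s$ maps to $\lambda\le -1/s$ with value $-\tfrac{s^2}{4}\myp{\lambda+\tfrac{1}{3s}}$, and the middle flat branch $s/6$ survives on $-1/s\le\lambda\le 1/(3s)$. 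These match the three cases in the statement precisely, so no new estimation is needed.

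For sharpness I would carry over the extremal functions from Theorem \ref{th.3.7} under the same parameter shift: the convex analogue $K_{s,2}$ (and its rotations) of \eqref{K} furnishes equality on the open middle interval, $K_{s}$ of \eqref{K1} on the two outer rays, and the boundary functions $F_x$, $G_x$ of \eqref{fgx} realize equality at the two critical values $\lambda=1/(3s)$ and $\lambda=-1/s$ respectively. I do not expect any serious obstacle here; unlike the Hankel determinant theorems, this result requires no optimization over an auxiliary variable $x$, since Lemma \ref{lemma1} already delivers the sharp one-variable bound through Theorem \ref{th.3.7}.
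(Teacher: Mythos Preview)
Your approach is correct and essentially identical to the paper's: both compute $h_1=-d_2$, $h_2=d_2^2-d_3$, observe that $\abs{h_2-\lambda h_1^2}=\abs{d_3-(1-\lambda)d_2^2}$, and then read off the three branches from Theorem~\ref{th.3.7} via the substitution $\lambda\mapsto 1-\lambda$. One small bookkeeping slip: under that substitution $F_x$ (which in Theorem~\ref{th.3.7} corresponds to $\lambda=(s+1)/s$) becomes the extremal at $\lambda=-1/s$, and $G_x$ becomes the extremal at $\lambda=1/(3s)$, so your assignment of $F_x$ and $G_x$ to the two boundary points is reversed.
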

\begin{proof}
Let $h\in \mathcal{CV}_{hpl}(s)$. A simple computation shows that
$z/h(z)=1+\sum_{n=1}^{\infty}h_nz^n=1-d_2z^2+(d_2^2-d_3)z^3+\cdots$. Then
\[
\abs{h_2-\lambda h_1^2}=\abs{d_3-\myp{1-\lambda}d_2^2}.
\]
Assertion now follows by application of Theorem \ref{th.3.7} with $1-\lambda$. The inequalities are sharp for the functions
\begin{equation*}f(z)= \left\{
\begin{array}{ll}
        \overline{\mu} K_{s,2}(\mu z)\quad &\textit{for}\quad \dfrac{-1}{s}< \lambda < \dfrac{1}{3s},\\[0.5em]
       \overline{\mu} K_{s}(\mu z)\quad &\textit{for}\quad \lambda \in\myp{-\infty, \dfrac{-1}{s}}\cup\myp{\dfrac{1}{3s},\infty},\\[0.5em]
  \overline{\mu}F_x(\mu z)    \quad &\textit{for}\quad\lambda=-\dfrac{1}{s},\\[0.5em]
   \overline{\mu}G_x(\mu z)    \quad &\textit{for}\quad\lambda = \dfrac{1}{3s}.
\end{array}\right.
\end{equation*}
where $\mu$ is an unimodular constant, the functions $F_x$ and $G_x\ \myp{0\le x\le 1}$ are given by \eqref{fgx},  $K_{s,2}$ and  $K_{s}$ are given by \eqref{K} and \eqref{K1}.
\end{proof}

Now, let the function $f^{-1}$ be defined by
\begin{equation}\label{eq51}
f^{-1}(w)=w+\sum_{n=2}^{\infty}A_nw^n \qquad \myp{|w|<r_0(f)},
\end{equation}
where $r_0(f)$  is the Koebe constant of the class $\mathcal{ST}_{hpl}(s)$ (or $\mathcal{CV}_{hpl}(s)$, respectively). By the Koebe constant of a class $\mathcal{F}$ we mean the largest disk $\{|w|<d\}$ that is contained in $f(\mathbb{D})$ for every $f\in \mathcal{F}$. The problem of Koebe constant was
solved in \cite{KME}.  Then
\begin{equation*}
f^{-1}\myp{f(z)}=z \qquad \myp{z \in \mathbb{D}}\qquad  \textit{and}\qquad f\myp{f^{-1}(w)}=w \qquad \myp{|w|<r_0(f)}.
\end{equation*}
The inverse function $f^{-1}$ is given by
\begin{equation}\label{eq52}
f^{-1}(w)=w-a_2w^2+\myp{2a_2^2-a_3}w^3-\myp{5a_2^3-5a_2a_3+a_4}w^4+\cdots.
\end{equation}

\begin{thm}\label{th_inverse_f}
Let $f\in \mathcal{ST}_{hpl}(s)$ be of the form \eqref{A},  and let $f^{-1}(z)$ be given by  \eqref{eq51}. Then the sharp inequalities hold
\[\abs{A_3-\lambda A_2^2} \leq \left\{
\begin{array}{lcl}
	s^2\myp{\dfrac{5s-1}{4s}-\lambda} &\textit{for}& \lambda \le  \dfrac{5s-3}{4s},\\[1em]
	\dfrac{s}{2}&\textit{for}& \dfrac{5s-3}{4s}\leq \lambda \leq \dfrac{5s+1}{4s}, \\[1em]
	s^2\myp{\lambda-\dfrac{5s-1}{4s}}&\textit{for}& \lambda \ge \dfrac{5s+1}{4s}.
\end{array}\right.
\]
\end{thm}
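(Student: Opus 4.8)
The plan is to reduce this Fekete-Szeg\"{o} problem for the inverse map $f^{-1}$ to the one already settled for $\mathcal{ST}_{hpl}(s)$ itself in Theorem \ref{thFS}, by expressing the inverse coefficients $A_2, A_3$ through $a_2, a_3$ and then performing an affine shift of the parameter $\lambda$.

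First I would read off the inverse coefficients directly from the expansion \eqref{eq52}: comparing with \eqref{eq51} gives $A_2=-a_2$ and $A_3=2a_2^2-a_3$. Substituting these into the functional yields
\[
A_3-\lambda A_2^2=(2-\lambda)a_2^2-a_3,
\]
so that $\abs{A_3-\lambda A_2^2}=\abs{a_3-(2-\lambda)a_2^2}$. Thus the inverse-coefficient functional is, up to an overall sign, exactly the ordinary Fekete-Szeg\"{o} functional $a_3-\mu a_2^2$ evaluated at the shifted parameter $\mu=2-\lambda$.

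The second step is to invoke Theorem \ref{thFS} with $\mu=2-\lambda$ and translate the three threshold regimes back to $\lambda$. The center of the estimate moves according to $2-\tfrac{3s+1}{4s}=\tfrac{5s-1}{4s}$, while the breakpoints transform via $2-\tfrac{3s+3}{4s}=\tfrac{5s-3}{4s}$ and $2-\tfrac{3s-1}{4s}=\tfrac{5s+1}{4s}$. A short check then shows that the regime $\mu\ge\tfrac{3s+3}{4s}$ becomes $\lambda\le\tfrac{5s-3}{4s}$ with bound $s^2\myp{\tfrac{5s-1}{4s}-\lambda}$, that the middle band $\tfrac{3s-1}{4s}\le\mu\le\tfrac{3s+3}{4s}$ becomes $\tfrac{5s-3}{4s}\le\lambda\le\tfrac{5s+1}{4s}$ with constant bound $s/2$, and that $\mu\le\tfrac{3s-1}{4s}$ becomes $\lambda\ge\tfrac{5s+1}{4s}$ with bound $s^2\myp{\lambda-\tfrac{5s-1}{4s}}$. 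These are precisely the three claimed inequalities.

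Finally, for sharpness the extremal functions carry over from Theorem \ref{thFS} under the same substitution: the middle range $\tfrac{5s-3}{4s}<\lambda<\tfrac{5s+1}{4s}$ is attained by a rotation of $\Phi_{s,2}$, the two outer ranges by a rotation of $\Phi_{s}$, and the two endpoints $\lambda=\tfrac{5s-3}{4s}$ and $\lambda=\tfrac{5s+1}{4s}$ by the functions $f_x$ and $g_x$ from \eqref{fg}, respectively. There is no genuine obstacle here, since everything rests on Theorem \ref{thFS}; the only point demanding care is the bookkeeping of the threshold translations, as an error in the affine substitution $\mu=2-\lambda$ would misplace the breakpoints and the center of the estimate.
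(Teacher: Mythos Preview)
Your proof is correct and follows essentially the same approach as the paper: express $A_2=-a_2$, $A_3=2a_2^2-a_3$, rewrite $\abs{A_3-\lambda A_2^2}=\abs{a_3-(2-\lambda)a_2^2}$, and then apply Theorem~\ref{thFS} with parameter $2-\lambda$, translating the thresholds and extremal functions accordingly. Your bookkeeping of the breakpoints and extremal cases matches the paper's.
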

\begin{proof}
Equations \eqref{eq51} and \eqref{eq52} give
\begin{equation}\label{coffiecient2}
A_2 =-a_2,\quad
A_3 =2a_2^2-a_3.
\end{equation}
Applying  \eqref{coffiecient} to \eqref{coffiecient2}, we obtain
\begin{equation*}
\abs{A_3-\lambda A_2^2}=\abs{a_3-(2-\lambda)a_2^2}.
\end{equation*}
Assertion now follows by application of Theorem \ref{thFS} with $2-\lambda$.
The inequalities are sharp for the functions
\begin{equation*}f(z)= \left\{
\begin{array}{ll}
        \overline{\mu} \Phi_{s,2}(\mu z)\quad &\textit{for}\quad \dfrac{5s-3}{4s}< \lambda < \dfrac{5s+1}{4s},\\[0.5em]
       \overline{\mu} \Phi_{s}(\mu z)\quad &\textit{for}\quad \lambda \in\myp{-\infty, \dfrac{5s-3}{4s}}\cup\myp{\dfrac{5s+1}{4s},\infty},\\[0.5em]
  \overline{\mu}f_x(\mu z)    \quad &\textit{for}\quad\lambda=\dfrac{5s-3}{4s},\\[0.5em]
   \overline{\mu}g_x(\mu z)    \quad &\textit{for}\quad\lambda = \dfrac{5s+1}{4s}.
\end{array}\right.
\end{equation*}
where $\Phi_{s,2}$ and  $\Phi_{s}$ are given by \eqref{F} and \eqref{F1},  $\mu$ is an unimodular constant, and $f_x$ and $g_x\ \myp{0\le x\le 1}$ are given by \eqref{fg}.
\end{proof}
\begin{thm}
Let $h\in \mathcal{CV}_{hpl}(s)$ with $h(z)=z+d_2z^2+d_3z^3+\cdots$. If $h^{-1}(w)=w+\sum_{n=2}^{\infty}B_nw^n$, then the sharp inequalities hold
\[\abs{B_3-\lambda B_2^2} \leq \left\{
\begin{array}{lcl}
	\dfrac{s^2}{4}\myp{\lambda-\dfrac{3s-1}{3s}} &\textit{for}& \lambda \ge  \dfrac{3s+1}{3s},\\[1em]
	\dfrac{s}{6}&\textit{for}& \dfrac{s-1}{s}\leq \lambda \leq \dfrac{3s+1}{3s}, \\[1em]
	\dfrac{s^2}{4}\myp{\dfrac{3s-1}{3s}-\lambda}&\textit{for}& \lambda \le \dfrac{s-1}{s}.
\end{array}\right.
\]
\end{thm}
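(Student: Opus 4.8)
The plan is to mimic the reduction carried out in Theorem~\ref{th_inverse_f}, but to feed the convex Fekete--Szeg\"{o} bound of Theorem~\ref{th.3.7} into the argument in place of the starlike one of Theorem~\ref{thFS}. The first step is to read off the inverse coefficients. The series inversion recorded in \eqref{eq52} is purely formal: it expresses the Taylor coefficients of $f^{-1}$ through those of $f$ and invokes no geometric hypothesis. Hence for $h(z)=z+d_2z^2+d_3z^3+\cdots$ with $h^{-1}(w)=w+\sum_{n=2}^{\infty}B_nw^n$ the same identities give
\begin{equation*}
B_2=-d_2,\qquad B_3=2d_2^2-d_3.
\end{equation*}

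With these in hand the Fekete--Szeg\"{o} functional of the inverse folds back onto the direct one. Indeed
\begin{equation*}
B_3-\lambda B_2^2=2d_2^2-d_3-\lambda d_2^2=-\myp{d_3-(2-\lambda)d_2^2},
\end{equation*}
so that $\abs{B_3-\lambda B_2^2}=\abs{d_3-(2-\lambda)d_2^2}$. I would then invoke Theorem~\ref{th.3.7} with $2-\lambda$ in place of its parameter and translate the three regimes. The middle band $\frac{3s-1}{3s}\le 2-\lambda\le\frac{s+1}{s}$ becomes $\frac{s-1}{s}\le\lambda\le\frac{3s+1}{3s}$, on which the bound stays the constant $s/6$; substituting $2-\lambda$ into $\pm\frac{s^2}{4}\myp{(2-\lambda)-\frac{3s+1}{3s}}$ and using $2-\frac{3s+1}{3s}=\frac{3s-1}{3s}$ collapses the two outer branches exactly to $\frac{s^2}{4}\myp{\lambda-\frac{3s-1}{3s}}$ for $\lambda\ge\frac{3s+1}{3s}$ and to $\frac{s^2}{4}\myp{\frac{3s-1}{3s}-\lambda}$ for $\lambda\le\frac{s-1}{s}$, which are precisely the claimed estimates.

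Because every inequality above is an equality transported intact from Theorem~\ref{th.3.7}, sharpness is inherited rather than re-proved: the extremal functions are the corresponding rotations $\overline{\mu}K_{s,2}(\mu z)$ on the middle range, $\overline{\mu}K_{s}(\mu z)$ on the two outer rays, and $\overline{\mu}G_x(\mu z)$, $\overline{\mu}F_x(\mu z)$ at the transition values $\lambda=\frac{3s+1}{3s}$ and $\lambda=\frac{s-1}{s}$ respectively, with $K_{s,2}$, $K_s$ as in \eqref{K}, \eqref{K1} and $F_x$, $G_x$ as in \eqref{fgx}. I anticipate no essential obstacle here: the computation of $B_2,B_3$ is the formal inversion already performed in \eqref{eq52}, and the only place demanding care is the affine substitution $\lambda\mapsto 2-\lambda$, where one must check that it interchanges the two outer branches, carries the endpoint $\frac{3s-1}{3s}$ to $\frac{3s+1}{3s}$ and $\frac{s+1}{s}$ to $\frac{s-1}{s}$, and matches signs so that no subcase is dropped or double-counted.
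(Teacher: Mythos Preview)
Your proposal is correct and follows essentially the same approach as the paper: reduce $\abs{B_3-\lambda B_2^2}$ to $\abs{d_3-(2-\lambda)d_2^2}$ via the formal inversion \eqref{eq52}, then apply Theorem~\ref{th.3.7} with parameter $2-\lambda$ and translate the three ranges. Your identification of the extremal functions also matches the paper's; the only cosmetic difference is that you spell out the affine substitution and endpoint checks more explicitly than the paper does.
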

\begin{proof} We use the  same reasoning as in the proof of Theorem \ref{th_inverse_f}.
Applying relations \eqref{coffiecientK}, we obtain
\[
\abs{B_3-\lambda B_2^2}=\abs{d_3-(2-\lambda)d_2^2}.
\]
Assertion now follows by application of Theorem \ref{th.3.7} with $2-\lambda$. The inequalities are sharp for the functions
\begin{equation*}f(z)= \left\{
\begin{array}{ll}
        \overline{\mu} K_{s,2}(\mu z)\quad &\textit{for}\quad \dfrac{s-1}{s}< \lambda < \dfrac{3s+1}{3s},\\[0.5em]
       \overline{\mu} K_{s}(\mu z)\quad &\textit{for}\quad \lambda \in\myp{-\infty, \dfrac{s-1}{s}}\cup\myp{\dfrac{3s+1}{3s},\infty},\\[0.5em]
  \overline{\mu}F_x(\mu z)    \quad &\textit{for}\quad\lambda=\dfrac{s-1}{s},\\[0.5em]
   \overline{\mu}G_x(\mu z)    \quad &\textit{for}\quad\lambda = \dfrac{3s+1}{3s}.
\end{array}\right.
\end{equation*}
where $K_{s,2}$ and  $K_{s}$ are given by \eqref{K} and \eqref{K1},  $\mu$ is an unimodular constant, and  $F_x$ and $G_x$ $\myp{0\le x\le 1}$ are given by \eqref{fgx}.
\end{proof}

\noindent\textbf{Acknowledgements}

\noindent  The authors thank the editor and the anonymous referees for constructive and pertinent suggestions.\bigskip


\end{document}